\newcommand{\C}{\mathbb{C}}
\newcommand{\RR}{{\mathcal R}}
\newcommand{\F}{{\mathcal{F}}}
\newcommand{\g}{{\mathfrak{g}}}
\newcommand{\h}{{\mathfrak{h}}}
\newcommand{\gl}{{\mathfrak{gl}}}
\renewcommand{\sl}{{\mathfrak{sl}}}
\newcommand{\solv}{{\mathfrak{sol}}}
\newcommand{\ad}{{\rm{ad}}}
\newcommand{\id}{{\rm{id}}}
\newcommand{\im}{{\rm{im}}}
\newcommand{\Hom}{{\rm{Hom}}}
\newcommand{\Lie}{{\rm{Lie}}}
\newcommand{\rank}{{\rm{rank}}}
\newcommand{\abs}[1]{\left| #1 \right|}
\def\dot{\mathchar"013A}  
\newcommand{\hdot}{{\raise1pt\hbox to0.35em{\huge $\dot$}}}
\newcommand{\cdga}{\ensuremath{\mathsf{cdga}}}
\newcommand{\dgla}{\ensuremath{\mathsf{dgla}}}
\begin{document}

\title*{Non-abelian resonance: product and coproduct formulas}

\author{Stefan Papadima \and Alexander~I.~Suciu}

\institute{%
Stefan Papadima \at 
Simion Stoilow Institute of Mathematics, 
P.O. Box 1-764, RO-014700 Bucharest, Romania \\
\email{Stefan.Papadima@imar.ro} \\
Partially supported by the Romanian Ministry of 
National Education, CNCS-UEFISCDI, grant PNII-ID-PCE-2012-4-0156
\and 
Alexander~I.~Suciu \at 
Department of Mathematics,
Northeastern University, Boston, MA 02115, USA \\
\email{a.suciu@neu.edu} \\
Partially supported by 
NSF grant DMS--1010298 and NSA grant H98230-13-1-0225
}
\maketitle

\abstract{We investigate the resonance varieties attached to a 
commutative differential graded algebra and to a representation 
of a Lie algebra, with emphasis on how these varieties behave 
under finite products and coproducts.}

\section{Introduction}
\label{sect:intro}

Resonance varieties emerged as a distinctive object of study in the 
late 1990s, from the theory of hyperplane arrangements.  Their  
usefulness became apparent in the past decade, when 
a slew of applications in geometry, topology, group theory, 
and combinatorics appeared.  

The idea consists of turning the cohomology ring of 
a space $X$ into a family of cochain complexes, 
parametrized by the first cohomology group $H^1(X,\C)$, 
and extracting certain varieties $\RR^i_m(X,\C)$ 
from these data, as the loci where the cohomology of those 
cochain complexes jumps.  Part of the importance of 
these  resonance varieties is their close connection with 
a different kind of jumping loci: the characteristic 
varieties of $X$, which record the jumps in homology with 
coefficients in rank $1$ local systems. 

In recent years, various generalizations of these notions 
have been introduced in the literature, for instance in  
\cite{DPS, DP, PS-jump, MPPS}.   The basic idea now 
is to replace the cohomology ring of a space 
by an algebraic analogue, to wit, a commutative, differential 
graded algebra $(A,d)$, and to replace the coefficient group $\C$ by 
a finite-dimensional vector space $V$, endowed with a representation 
$\theta\colon \g\to \gl(V)$, for some finite-dimensional Lie algebra $\g$. 
In this setting, the parameter space for the higher-rank resonance 
varieties, $\RR^i_m(A, \theta)$, is no longer $H^1(A)$, but rather, 
the space of flat, $\g$-valued connections on $A$, which, according 
to the results of Goldman and Millson from \cite{GM}, is the natural 
replacement for the variety of rank $1$ local systems on $X$.  

In a previous paper, \cite[\S 13]{PS-plms}, we established some basic 
product and coproduct formulas for the classical resonance varieties. 
In this note, we extend those results to the non-abelian case, 
using some of the machinery developed in \cite{MPPS}. 
In Theorem \ref{thm:res prod}, we give a general upper bound 
on the varieties $\RR^i_1(A\otimes \bar{A},\theta)$ in terms of the 
resonance varieties of the factors and the space of $\g$-flat 
connections on the tensor product of the two \cdga's. In 
Theorem \ref{thm:prodres2}, we improve this bound to an
equality of a similar flavor, in the case when the respective 
\cdga's have zero differentials, and $\g$ is either $\sl_2$ or $\solv_2$.  
Finally, in Corollary \ref{cor:coprodg1} and Theorem \ref{thm:res coprod} 
we give precise formulas for the varieties $\RR^i_1(A\vee \bar{A},\theta)$ 
associated to the wedge sum of two \cdga's.  

\section{Flat connections and holonomy Lie algebras}
\label{sect:flat holo}

We start by introducing some basic notions (\cdga's, flat connections, 
holonomy Lie algebras), following in rough outline the exposition 
from \cite{MPPS}. 

\subsection{Differential graded algebras and Lie algebras}
\label{subsec:cdga}

Let $A=(A^{\hdot},d)$ be a commutative, differential 
graded algebra (\cdga) over the field of 
complex numbers, that is, a positively-graded $\C$-vector 
space $A=\bigoplus_{i\ge 0} A^i$, endowed with a 
graded-commutative multiplication 
map $\cdot\colon A^i \otimes A^j \to A^{i+j}$ and a differential 
$d\colon A^i\to A^{i+1}$ satisfying $d(a\cdot b) = da\cdot b 
+(-1)^{i} a \cdot db$, for every $a\in A^i$ and $b\in A^j$.

We will assume throughout that $A$ is connected, i.e., $A^0=\C$,  
and of finite $q$-type, for some $q\ge 1$, i.e., $A^i$ is 
finite-dimensional, for all $i\le q$. Let $Z^i(A)=\ker (d\colon A^i\to A^{i+1})$, 
$B^i(A)=\im (d\colon A^{i-1}\to A^{i})$, and 
$H^i(A)=Z^i(A)/B^i(A)$. 
For each $i\le q$, the dimension of this vector space, 
$b_i(A)=\dim_{\C} H^i(A)$, is finite.

Now let $\g$ be a Lie algebra over $\C$. 
On the vector space $A\otimes \g$, we may define a bracket by
$[a\otimes x, b\otimes y]= ab\otimes [x,y]$ 
and a differential given by $\partial (a\otimes x) = d a \otimes x$, 
for $a,b\in A$ and $x,y\in \g$. 
This construction produces a differential graded Lie algebra (\dgla), 
$A\otimes \g = (A^{\hdot}\otimes \g, \partial)$.  It is readily verified 
that the assignment $(A,\g)\leadsto A\otimes \g$ is functorial 
in both arguments. 

\subsection{Flat, $\g$-valued connections}
\label{subsec:flat}

\begin{definition}
\label{def:flat conn}
An element $\omega\in A^1\otimes \g$ is called an {\em infinitesimal, 
$\g$-valued flat connection}\/ on $(A,d)$ if $\omega$ satisfies the 
Maurer--Cartan equation,
\begin{equation}
\label{eq:flat}
\partial \omega +  [\omega,\omega]/2 = 0 . 
\end{equation}
\end{definition}

We will denote by $\F(A,\g)$ the subset of $A^1\otimes \g$ 
consisting of all flat connections. A typical element in $A^1 \otimes \g$ 
is of the form $\omega =\sum_j \eta_j \otimes x_j$, with 
$\eta_j\in A^1$ and $x_j\in \g$; the flatness condition 
amounts to 
\begin{equation}
\label{eq:flat coords}
\sum_{j} d\eta_j \otimes x_j + 
\sum_{j<k} \eta_j \eta_k \otimes [x_j,x_k] =0.
\end{equation}

In the rank one case, i.e., the case when $\g=\C$, the space 
$\F(A,\C)$ may be identified with the vector space 
$H^1(A)=\{\omega \in A^1\mid d\omega=0\}$. In particular, 
if $d=0$, then $\F(A,\C)=A^1$.

The bilinear map $P\colon A^1\times \g\to A^1\otimes \g$, 
$(\eta,g)\mapsto \eta\otimes g$ induces a map 
$P\colon H^1(A)\times \g \to \F(A,\g)$.  
The {\em essentially rank one}\/ part of the set of flat $\g$-connections  
on $A$ is the image of this map:
\begin{equation}
\label{eq:rk1}
\F^1(A,\g)=P(H^1(A)\times \g).
\end{equation}

\subsection{Holonomy Lie algebra}
\label{subsec:holo}

An alternate view of the parameter space of flat connections 
is as follows.  Let $A_i=\Hom (A^i, \C)$ be the dual 
vector space.  Let $\nabla \colon A_2 \to A_1\wedge A_1$ 
be the dual to the multiplication map 
$A^1\wedge A^1\to A^2$, and let $d_1\colon A_2\to A_1$ be 
the dual of the differential $d^1\colon A^1\to A^2$. 

\begin{definition}[\cite{MPPS}]
\label{def:holo cdga}
The {\em holonomy Lie algebra}\/ of a $\cdga$ $A=(A^{\hdot},d)$ is the 
quotient of the free Lie algebra on the $\C$-vector space 
$A_1$ by the ideal generated by the image of 
$\partial_A=d_1 + \nabla$:
\begin{equation}
\label{eq:holo}
\h(A) = \Lie(A_1) / (\im (\partial_A)). 
\end{equation}
\end{definition}

\begin{remark}
\label{rem:holo chen}
In the case when $d=0$, the above definition coincides with  
the classical holonomy Lie algebra $\h(A)=\Lie(A_1)/(\im(\nabla))$ 
of K.T.~Chen \cite{Ch}.  In this situation, $\h(A)$ inherits a natural 
grading from the free Lie algebra, compatible with the Lie bracket. 
Consequently, $\h(A)$ is a finitely-presented, graded Lie algebra, 
with generators in degree $1$ and relations in degree $2$.
\end{remark}

In general, though, the ideal generated by $\im(\partial_A)$ 
is not homogeneous, and the Lie algebra $\h(A)$ is not graded. 
Here is a concrete example, extracted from \cite{MPPS}.

\begin{example}
\label{ex:solv}
Let $A$ be the exterior algebra on generators $x,y$ in 
degree $1$, endowed with the differential given by $d{x}=0$ 
and $d{y}=y\wedge x$, and let $\solv_2$ be the  
Borel subalgebra of $\sl_2$. Then $\h(A)\cong \solv_2$, 
as (ungraded) Lie algebras. 
\end{example}

The next lemma (see \cite[\S4]{MPPS} for details) identifies the 
set of flat, $\g$-valued connections on a $\cdga$ $(A,d)$ with the set of 
Lie algebra morphisms from the holonomy Lie algebra of $(A,d)$ 
to $\g$. 

\begin{lemma}
\label{lem:flat hom}
The canonical isomorphism $A^1\otimes \g \cong  \Hom (A_1,\g)$  
restricts to isomorphisms 
$\F(A,\g) \cong  \Hom_{\Lie} (\h(A), \g)$ and 
$\F^1(A,\g) \cong  \Hom_{\Lie}^1 (\h(A), \g)$. 
\end{lemma}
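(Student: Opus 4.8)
The plan is to unravel the two isomorphisms claimed in the lemma by directly translating the Maurer--Cartan equation into the condition of being a Lie algebra map. First I would set up the linear identification $A^1 \otimes \g \cong \Hom(A_1,\g)$ coming from the canonical isomorphism $A^1 \cong \Hom(A_1,\C) = (A_1)^*$ (valid since $A^1$ is finite-dimensional, as $A$ has finite $q$-type with $q \ge 1$). Under this identification, an element $\omega = \sum_j \eta_j \otimes x_j$ corresponds to the linear map $f_\omega \colon A_1 \to \g$ sending $\xi \mapsto \sum_j \langle \eta_j, \xi\rangle x_j$. By the universal property of the free Lie algebra, $f_\omega$ extends uniquely to a Lie algebra morphism $\Lie(A_1) \to \g$; the point is to show that this morphism factors through $\h(A) = \Lie(A_1)/(\im(\partial_A))$ precisely when $\omega$ is flat.

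Next I would carry out the bookkeeping on the relations. The ideal in $\Lie(A_1)$ is generated by $\im(\partial_A)$, with $\partial_A = d_1 + \nabla \colon A_2 \to A_1 \oplus (A_1 \wedge A_1)$; a morphism out of $\Lie(A_1)$ kills this ideal iff it kills $\partial_A(z)$ for every $z \in A_2$. Dualizing, for $z \in A_2$ one has $\langle d_1 z, \xi \rangle = \langle z, d\xi\rangle$ and $\langle \nabla z, \xi \wedge \eta\rangle = \langle z, \xi\eta\rangle$, so evaluating the extended morphism on $\partial_A(z)$ produces, after the standard identification of $A^1 \wedge A^1$ with alternating tensors, exactly the element $\sum_j \langle d\eta_j, z\rangle x_j + \sum_{j<k}\langle \eta_j\eta_k, z\rangle [x_j,x_k]$ of $\g$. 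Comparing with the coordinate form \eqref{eq:flat coords} of the flatness condition — which says precisely that $\sum_j d\eta_j \otimes x_j + \sum_{j<k} \eta_j\eta_k \otimes [x_j,x_k] = 0$ in $A^2 \otimes \g$, i.e. pairs to zero against every $z \in A_2$ — I see that $f_\omega$ descends to $\h(A)$ iff $\omega \in \F(A,\g)$. This gives the first isomorphism, and it is a bijection because the free-Lie-algebra extension and the linear identification are both bijective and mutually inverse to the restriction-to-degree-one map.

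For the second statement, I would observe that $\F^1(A,\g) = P(H^1(A)\times\g)$ consists exactly of those flat connections of the special form $\omega = \eta \otimes x$ with $d\eta = 0$. Under $f_\omega$, such an $\omega$ corresponds to a map $A_1 \to \g$ whose image is the line $\C x$ (or zero); since its kernel contains $d_1(A_2)$ automatically when $d\eta = 0$, the induced map $\h(A) \to \g$ has abelian (one-dimensional) image. Conversely, a morphism $\h(A) \to \g$ with at most one-dimensional image comes from an $\omega$ of this rank-one shape. Matching this with whatever definition of $\Hom^1_{\Lie}(\h(A),\g)$ the paper uses — morphisms whose restriction to $A_1$ has rank $\le 1$, equivalently those factoring through the abelianization up to a line — yields $\F^1(A,\g) \cong \Hom^1_{\Lie}(\h(A),\g)$. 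I expect the main obstacle to be purely notational rather than conceptual: pinning down the sign and the alternation conventions so that the pairing computation against $\partial_A(z)$ reproduces \eqref{eq:flat coords} on the nose, and checking that the identification $A^1 \wedge A^1 \hookrightarrow A^1 \otimes A^1$ is compatible with the dualization of the product map; once those conventions are fixed, each implication is a one-line verification. Since the paper cites \cite[\S4]{MPPS} for details, I would keep the argument at the level of this translation and refer there for the convention-heavy verifications.
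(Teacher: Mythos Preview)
Your proposal is correct. The paper does not supply its own proof of this lemma: it merely records the statement and points to \cite[\S4]{MPPS} for the details, exactly as you anticipate in your final sentence. Your argument---identifying $\omega\in A^1\otimes\g$ with a linear map $A_1\to\g$, extending freely to $\Lie(A_1)$, and then checking that vanishing on $\im(\partial_A)=\im(d_1+\nabla)$ is equivalent, via duality, to the coordinate flatness condition \eqref{eq:flat coords}---is the standard translation and is what the cited reference carries out. The rank-one part is also handled correctly: a morphism with at most one-dimensional image automatically kills $\im(\nabla)$, so the residual condition is $f\circ d_1=0$, i.e.\ $d\eta=0$, matching the definition $\F^1(A,\g)=P(H^1(A)\times\g)$.
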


Here, $\Hom_{\Lie}^1 (\h(A), \g)$ denotes the subset of Lie algebra 
morphisms with at most $1$-dimensional image. 

\section{Resonance varieties}
\label{sect:res}

In this section, we recall the definition of the Aomoto complexes 
associated to a $\cdga$ $(A,d)$ and a representation of a Lie algebra $\g$, 
as well as the resonance varieties associated to these data, following 
the approach from \cite{DPS, DP, MPPS}. 

\subsection{Twisted differentials}
\label{subsec:aomoto}

Let $\theta \colon \g \to \gl (V)$ be a representation 
of our Lie algebra $\g$ in a finite-dimensional, non-zero $\C$-vector 
space $V$.  For each flat connection $\omega\in \F(A,\g)$, 
we make $A\otimes V$ into a cochain complex, 
\begin{equation}
\label{eq:aomoto}
\xymatrixcolsep{22pt}
\xymatrix{(A\otimes V , d_{\omega})\colon  \ 
A^0 \otimes V \ar^(.65){d_{\omega}}[r] & A^1\otimes V 
\ar^(.5){d_{\omega}}[r] 
& A^2\otimes V   \ar^(.55){d_{\omega}}[r]& \cdots },
\end{equation}
using as differential the covariant derivative 
\begin{equation}
\label{eq:dw}
d_{\omega}=d\otimes \id_V + \ad_{\omega},
\end{equation}
where $\ad_{\omega}$ is defined via the Lie semi-direct product 
$V \rtimes_{\theta} \g$.  The flatness condition insures that 
$d_{\omega}^2=0$.  In coordinates, if 
$\omega= \sum_j \eta_j  \otimes x_j$, then 
\begin{equation}
\label{eq=dcoord}
d_{\omega} (\alpha \otimes v)= d \alpha \otimes v + 
\sum_j \eta_j \alpha \otimes \theta (x_j) (v) ,
\end{equation}
for all $\alpha \in A$ and $v\in V$.  

It is readily seen that the multiplication map 
\begin{equation}
\label{eq:mu}
\mu\colon (A,d) \otimes (A\otimes V,d_{\omega}) \to 
(A\otimes V,d_{\omega}), \quad a\otimes (b\otimes v)\mapsto 
ab \otimes v
\end{equation}
defines the structure of a differential $A^{\hdot}$-module 
on the Aomoto complex $(A^{\hdot} \otimes V,d_{\omega})$.
In particular, the graded vector space $H^{\hdot}(A\otimes V, d_{\omega})$ 
is, in fact, a graded module over the ring $H^{\hdot}(A)$.

\subsection{Resonance varieties of a \cdga}
\label{subsec:res cdga}
Associated to the above data are the resonance varieties 
\begin{equation}
\label{eq:rra}
\RR^i_m(A, \theta)= \{\omega \in \F(A,\g)  
\mid \dim_{\C} H^i(A \otimes V, d_{\omega}) \ge  m\}.
\end{equation}
If  $\g$ is finite-dimensional, 
the sets $\RR^i_m(A, \theta)$ are Zariski closed subsets of 
$\F(A,\g)$, for all $i\le q$ and $m\ge 0$.  
In the case when $\g=\C$ and $\theta=\id_\C$, we will simply 
write $\RR^i_m(A)$ for these varieties, viewed as algebraic 
subsets of $H^1(A)$. Clearly,
\begin{equation}
\label{eq:zero}
0\in \RR^i_1(A,\theta) \Leftrightarrow  
0\in \RR^i_1(A)\Leftrightarrow H^i(A)\ne 0.
\end{equation}

When $d=0$, the varieties $\RR^i_m(A)$ 
are homogeneous subsets of $A^1$. This happens  
in the classical case, when $X$ is a path-connected space, 
and $A=H^{\hdot}(X,\C)$ is its cohomology algebra, 
endowed with the zero differential.

In general, though, the resonance varieties of a $\cdga$ 
are not homogeneous sets, even in the rank $1$ case.  

\begin{example}
\label{rem:res stuff}
Let $(A,d)$ be the $\cdga$ from Example \ref{ex:solv}.  
Then $H^1(A)=\C$, while $\RR_1^1(A)=\{0,1\}$.
\end{example}

\begin{lemma}
\label{lem:res f1}
Let $\omega=\eta\otimes g\in \F^1(A, \g)$.  
\begin{enumerate}
\item 
If $\omega \in \RR^i_1(A, \theta)$, then $A^i\neq 0$. 

\item 
Suppose $A^i\neq 0$ and $d=0$.  Then $\omega \in \RR^i_1(A, \theta)$ 
if and only if either $\eta \in \RR^i_1(A)$ or $\det (\theta(g))=0$. 
\end{enumerate}
\end{lemma}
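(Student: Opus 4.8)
The plan is to analyze the Aomoto complex $(A\otimes V, d_\omega)$ directly for a connection of the essentially rank one form $\omega = \eta\otimes g$, exploiting the fact that $\theta(g)\in\gl(V)$ is a \emph{single} linear operator, so one can decompose $V$ according to its generalized eigenspaces. First I would prove part (1): if $A^i = 0$ then $A^i\otimes V = 0$, so $H^i(A\otimes V, d_\omega) = 0$ regardless of $\omega$; hence $\omega\notin\RR^i_1(A,\theta)$. This holds for any $\omega\in\F(A,\g)$, not just rank one ones, so the contrapositive gives (1) immediately.

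For part (2), assume $d = 0$, so $d_\omega$ is just multiplication by $\eta$ twisted by $\theta(g)$: explicitly, on $A^\bullet\otimes V$ the differential sends $\alpha\otimes v \mapsto \eta\alpha\otimes\theta(g)(v)$. The key observation is that $\C[t]$ acts on the chain complex, with $t$ acting as $\eta\cdot(-)\otimes\theta(g)(-)$, i.e. the whole complex is obtained from the "universal" situation by specialization. Concretely, decompose $V = \bigoplus_\lambda V_\lambda$ into generalized eigenspaces of $\theta(g)$. On each $V_\lambda$, write $\theta(g)|_{V_\lambda} = \lambda\cdot\id + N_\lambda$ with $N_\lambda$ nilpotent; filtering by powers of $N_\lambda$, the associated graded complex is a direct sum of $\dim V_\lambda$ copies of the complex $(A^\bullet, \lambda\eta\cdot(-))$, i.e. of $(A^\bullet, \eta\cdot(-))$ scaled by $\lambda$. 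Now $H^i(A^\bullet, \lambda\eta\cdot(-))\neq 0$ iff $\lambda = 0$ (then it is $H^i(A^\bullet,0) = A^i \neq 0$) or $\lambda\neq 0$ and $\eta\in\RR^i_1(A)$ (since scaling $\eta$ by a nonzero $\lambda$ does not change the resonance condition, as $\RR^i_1(A)$ is homogeneous when $d=0$). I would then argue that $H^i(A\otimes V, d_\omega)\neq 0$ iff the associated graded has nonzero $H^i$ in some eigenblock — using the standard spectral-sequence / long-exact-sequence comparison for the $N_\lambda$-filtration, noting the filtration is finite — which happens iff either $0$ is an eigenvalue of $\theta(g)$, i.e. $\det(\theta(g)) = 0$, or $\eta\in\RR^i_1(A)$.

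The main obstacle is the comparison between the cohomology of $(A\otimes V, d_\omega)$ and that of its associated graded with respect to the nilpotent filtration: vanishing of $H^i$ of the associated graded gives vanishing of $H^i$ of the total complex, but the converse direction (nonvanishing of $\gr$ cohomology forcing nonvanishing of the total cohomology) requires care, since cohomology is not additive under extensions. I would handle this by induction on the length of the nilpotent Jordan blocks, using the short exact sequences $0\to \ker N_\lambda \to V_\lambda \to \im N_\lambda \to 0$ of $\theta(g)$-modules and the resulting long exact sequences in $d_\omega$-cohomology, together with the observation that the connecting maps respect the $H^\bullet(A)$-module structure. Alternatively — and this is probably cleaner — one reduces to the case where $\theta(g)$ is a single Jordan block with eigenvalue $\lambda$: for $\lambda\neq 0$ the operator $d_\omega$ is conjugate (after rescaling and a change of basis in $V$) to an upper-triangular twist of the Koszul-type differential by $\eta$, and one checks that invertibility of $\theta(g)$ together with $\eta\notin\RR^i_1(A)$ forces the twisted complex to be exact at position $i$; for $\lambda = 0$ one gets a nilpotent twist of $(A^\bullet,0)$, whose $H^i$ contains $A^i\neq 0$. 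I expect the bookkeeping in the Jordan-block reduction to be the only genuinely technical point; everything else is formal.
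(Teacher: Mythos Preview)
Your argument is correct. Part~(1) is the same triviality the paper records. For part~(2), the paper gives no direct argument at all: it simply notes that $\RR^i_1(A)$ is homogeneous when $d=0$ and then invokes \cite[Corollary~3.6]{MPPS}. Your generalized-eigenspace decomposition of $\theta(g)$ is thus a self-contained alternative to a black-box citation, and it yields more, namely an explicit description of $H^i(A\otimes V,d_\omega)$ in terms of $H^i(A,\mu_\eta)$ and the spectrum of $\theta(g)$. One remark, though: the ``main obstacle'' you anticipate does not actually arise. Because $d=0$, the covariant derivative is a pure tensor of operators, $d_\omega=\mu_\eta\otimes\theta(g)$ with $\mu_\eta$ left multiplication by $\eta$; since each generalized eigenspace $V_\lambda$ is $\theta(g)$-stable, the Aomoto complex is already a \emph{direct sum} of subcomplexes $(A\otimes V_\lambda,\,\mu_\eta\otimes T_\lambda)$, so there is no filtration whose associated graded must be compared back to the total complex. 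For $\lambda\ne 0$ the operator $T_\lambda$ is invertible, and the degree-wise isomorphisms $\id_{A^k}\otimes T_\lambda^{-k}$ conjugate $\mu_\eta\otimes T_\lambda$ to $\mu_\eta\otimes\id$, giving $H^i(A\otimes V_\lambda,d_\omega)\cong H^i(A,\mu_\eta)\otimes V_\lambda$ outright. For $\lambda=0$, the elementary identities $\ker(\mu_\eta\otimes N_0)=\ker\mu_\eta\otimes V_0 + A\otimes\ker N_0$ and $\im(\mu_\eta\otimes N_0)=\im\mu_\eta\otimes\im N_0$ give $\dim H^i\ge(\dim A^i)(\dim\ker N_0)>0$ by a one-line count. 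No spectral sequence and no induction on Jordan-block length is needed.
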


\begin{proof}
The first claim is clear. 
When $d=0$, recall that the rank one resonance variety $\RR^i_1(A)$ is
homogeneous. The second claim then follows from \cite[Corollary 3.6]{MPPS}. 
\end{proof}

\subsection{Resonance varieties of a Lie algebra}
\label{subsec:res lie}

Let $\h$ be a finitely generated Lie algebra, and 
let $\theta\colon \g\to \gl(V)$ 
be a representation of another Lie algebra.  
Associated to these data are the resonance varieties 
\begin{equation}
\label{eq:lie res}
\RR^i_m(\h, \theta)= \{\varphi \in \Hom_{\Lie}(\h,\g)  
\mid \dim_{\C} H^i(\h,  V_{\theta\circ \varphi}) \ge  m\}, 
\end{equation}
where $V_{\theta\circ \varphi}$ denotes the $\C$-vector space 
$V$, viewed as a module over the enveloping algebra $U(\h)$ 
via the representation $\theta\circ \varphi\colon \h \to \gl(V)$.  

Now suppose $\g$ is finite-dimensional. 
Then the resonance varieties $\RR^i_m(\h,\theta)$ 
are Zariski-closed subsets of $\Hom_{\Lie}(\h,\g)$, 
for all $i\le 1$ and $m\ge 0$.

\begin{lemma}[\cite{MPPS}]
\label{lem:lie res}
For each $i\le 1$ and $m\ge 0$, the canonical isomorphism 
$\F(A,\g) \cong \Hom_{\Lie} (\h(A), \g)$  
restricts to an isomorphism 
\begin{equation}
\label{eq:lie res iso}
\RR^i_m(A,\theta) \cong \RR^i_m (\h(A), \theta) . 
\end{equation}
\end{lemma}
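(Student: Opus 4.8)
The plan is to show that, under the canonical identification $\F(A,\g)\cong\Hom_{\Lie}(\h(A),\g)$ from Lemma \ref{lem:flat hom}, the Aomoto complex $(A\otimes V,d_\omega)$ computes precisely the Lie algebra cohomology $H^{\hdot}(\h(A),V_{\theta\circ\varphi})$ in degrees $i\le 1$, where $\varphi\colon\h(A)\to\g$ is the morphism corresponding to $\omega$. Once this is established degreewise, the asserted equality of resonance varieties follows immediately: for a fixed $\omega$ with corresponding $\varphi$, the condition $\dim_\C H^i(A\otimes V,d_\omega)\ge m$ translates into $\dim_\C H^i(\h(A),V_{\theta\circ\varphi})\ge m$, so the defining loci \eqref{eq:rra} and \eqref{eq:lie res} match under the isomorphism. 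So the whole content is the comparison of the two cochain complexes in low degrees.

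First I would write out the Chevalley--Eilenberg complex computing $H^{\hdot}(\h(A),V_{\theta\circ\varphi})$ in degrees $0,1,2$: it is $V\to\Hom(\h(A),V)\to\Hom(\Lambda^2\h(A),V)$, with the first map $v\mapsto(x\mapsto x\cdot v)$ and the second the usual formula involving both the bracket of $\h(A)$ and the action. Next I would record the beginning of the Aomoto complex, $A^0\otimes V\to A^1\otimes V\to A^2\otimes V$, using the coordinate formula \eqref{eq=dcoord} for $d_\omega$. Since $A$ is connected of finite $q$-type, for $i\le q$ (in particular $i\le 1$, which needs $q\ge 1$) the spaces $A^i$ are finite-dimensional, so $A^i\otimes V\cong\Hom(A_i,V)$. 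The key algebraic input is the presentation $\h(A)=\Lie(A_1)/(\im\partial_A)$ with $\partial_A=d_1+\nabla$: dualizing, $A_1$ is the space of degree-one generators, and $A_2$ surjects onto the space of relations via $\partial_A$. This is exactly what lets one identify $\Hom(\h(A),\g)$ as the flat connections, and the same bookkeeping identifies the differentials.

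The concrete steps in order: (1) identify $A^0\otimes V=V$ with $C^0$ and check that $d_\omega$ on $A^0\otimes V$ is $v\mapsto\sum_j\eta_j\otimes\theta(x_j)(v)$, which under $A^1\cong\Hom(A_1,\C)$ and $\varphi(a_1)=\sum_j\eta_j(a_1)x_j$ becomes the Chevalley--Eilenberg map $v\mapsto(a_1\mapsto\theta(\varphi(a_1))(v))$; (2) on $A^1\otimes V$, decompose $d_\omega=d\otimes\id+\ad_\omega$ and match the $d\otimes\id$ part with the piece of the Chevalley--Eilenberg differential coming from the relations $d_1$ (the differential part of $\partial_A$) and the $\ad_\omega$ part with the piece coming from the quadratic relations $\nabla$ together with the module action; the compatibility is exactly the statement that the covariant derivative squares to zero because $\varphi$ kills $\im\partial_A$; (3) conclude that the two complexes agree through degree $2$, hence have isomorphic $H^0$ and $H^1$, naturally in $\omega$. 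This naturality, combined with the $\g$ finite-dimensional hypothesis guaranteeing Zariski-closedness on both sides, gives \eqref{eq:lie res iso}.

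I expect the main obstacle to be step (2): carefully checking that the cross-term $\ad_\omega$ in the covariant derivative corresponds, under dualization, to the correct combination of the Lie bracket of $\h(A)$ and the $\h(A)$-action on $V$ appearing in the Chevalley--Eilenberg differential $C^1\to C^2$. One must untangle the sign conventions in $d(a\cdot b)=da\cdot b+(-1)^i a\cdot db$ and in the semidirect product $V\rtimes_\theta\g$, and verify that the quadratic part $\sum_{j<k}\eta_j\eta_k\otimes[x_j,x_k]$ of the flatness condition is precisely dual to imposing that $\varphi$ respect brackets modulo $\im\nabla$. This is essentially a dressed-up version of the rank-one computation, and since the statement is attributed to \cite{MPPS}, I would lean on the detailed verification there, presenting here only the identification of the two complexes and the resulting degreewise cohomology isomorphism. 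Note that the restriction $i\le 1$ is essential: the holonomy Lie algebra only sees $A$ through $A_1$ and $A_2$, so nothing can be said about $H^i$ for $i\ge 2$ without further hypotheses, and indeed this is why the statement is confined to $i\le 1$.
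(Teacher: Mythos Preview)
The paper does not supply a proof of this lemma: it is stated with attribution to \cite{MPPS} and no argument is given. So there is nothing in the present paper to compare your proposal against. Your sketch is the natural and correct route---match the low-degree Chevalley--Eilenberg complex of $\h(A)$ with the beginning of the Aomoto complex, using that $\h(A)$ is generated by $A_1$ with relations $\im(\partial_A)\subset A_1\oplus\Lambda^2 A_1$---and you already flag the one genuinely delicate point, namely the bookkeeping in step~(2). One small caution: do not phrase step~(2) as an isomorphism of complexes, since $C^1(\h(A),V)=\Hom(\h(A),V)$ is typically not $A^1\otimes V$; rather, use that derivations $\h(A)\to V$ are exactly the linear maps $A_1\to V$ whose canonical extension to $\Lie(A_1)$ annihilates $\im(\partial_A)$, and check that this cocycle condition coincides with $d_\omega^1=0$ after dualizing. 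With that adjustment your outline goes through, and your remark on why the restriction $i\le 1$ is necessary is on point.
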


\begin{example}
\label{ex:r0}
Let $x_1,\dots, x_n$ be a basis for $A_1$.  Using  
Lemma \ref{lem:lie res} and \cite[Lemma 2.3]{MPPS}, 
we find that 
\begin{equation}
\label{eq:r01}
\RR^0_1(\h(A), \theta)= \Big\{\varphi \in \Hom_{\Lie}(\h(A),\g)  
\:\Big|\,  \bigcap_{i=1}^{n} \ker (\theta \circ \varphi (x_i)) \ne 0 \Big. \Big\}. 
\end{equation}
\end{example}

\section{Products}
\label{sect:prod}

In this section, we study the way the various constructions outlined 
so far behave under (finite) product operations.

\subsection{Holonomy Lie algebra and products} 
\label{subsec:holo prod}

Let $(A,d)$ and $(\bar{A},\bar{d})$ be two \cdga's.  
The tensor product of these two $\C$-vector spaces, 
$A\otimes \bar{A}$, is again a \cdga, with grading 
$(A\otimes \bar{A})^{q}=\bigoplus_{i+j=q} A^i \otimes \bar{A}^j$, 
multiplication $(a\otimes \bar{a})\cdot (b\otimes \bar{b})=
(-1)^{\abs{\bar{a}}\abs{b}} (ab\otimes \bar{a}\bar{b})$, and differential 
$D$ given on homogeneous elements by $D(a\otimes \bar{a})=
da \otimes \bar{a} +(-1)^{\abs{a}} a\otimes \bar{d}\bar{a}$. 

The definition is motivated by the cartesian product of 
spaces, in which case the K\"{u}nneth formula gives an isomorphism
\begin{equation}
\label{eq:kunneth}
(H^{\hdot}(X \times \overline{X},\C), D=0) \cong (H^{\hdot}(X,\C), d=0) 
\otimes (H^{\hdot}(\overline{X},\C), \overline{d}=0). 
\end{equation}

In \cite[\S9]{DPS}, we gave a product formula for holonomy Lie  
algebras in the $1$-formal case.  We now extend this formula 
to \cdga's with non-zero differential.  

\begin{proposition}
\label{prop:prodpres}
Let $A$ and $\bar{A}$ be two connected \cdga's.  Then the Lie algebra 
$\h(A\otimes \bar{A})$ is generated by $A_1 \oplus \bar{A}_1$, subject 
to the relations $\partial_A (A_2)=0$,  $\partial_{\bar{A}} (\bar{A}_2)=0$,
and $[A_1 ,\bar{A}_1]=0$. 
\end{proposition}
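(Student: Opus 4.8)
The plan is to analyze the holonomy Lie algebra $\h(A\otimes\bar A)$ via its defining presentation in Definition~\ref{def:holo cdga}, by explicitly computing the dual maps $\nabla$ and $d_1$ for the tensor product \cdga. First I would identify $(A\otimes\bar A)_1 = \Hom((A\otimes\bar A)^1,\C)$; since $(A\otimes\bar A)^1 = (A^1\otimes\bar A^0)\oplus(A^0\otimes\bar A^1) \cong A^1\oplus\bar A^1$ (using connectedness, $A^0=\bar A^0=\C$), we get $(A\otimes\bar A)_1 \cong A_1\oplus\bar A_1$. So $\h(A\otimes\bar A)$ is a quotient of $\Lie(A_1\oplus\bar A_1)$, and the task is to pin down the relation ideal, i.e.\ the image of $\partial_{A\otimes\bar A} = (d_{A\otimes\bar A})_1 + \nabla_{A\otimes\bar A}$ on $(A\otimes\bar A)_2$.

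Next I would decompose the degree-two part: $(A\otimes\bar A)^2 = (A^2\otimes\bar A^0)\oplus(A^1\otimes\bar A^1)\oplus(A^0\otimes\bar A^2)$, so dually $(A\otimes\bar A)_2 \cong A_2\oplus(A_1\otimes\bar A_1)\oplus\bar A_2$. I would then compute $\partial_{A\otimes\bar A}$ on each of the three summands. On the $A_2$ summand, the multiplication on $A\otimes\bar A$ restricted to $A^1\otimes A^1 \to A^2$ is just the multiplication of $A$, and the differential $D$ restricted to $A^1 \to A^2$ is just $d_A$; dualizing, $\partial_{A\otimes\bar A}$ on $A_2$ reproduces exactly $\partial_A$, giving the relations $\partial_A(A_2)=0$. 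Symmetrically, the $\bar A_2$ summand yields $\partial_{\bar A}(\bar A_2)=0$. On the middle summand $A_1\otimes\bar A_1$, the relevant piece of the multiplication map is $(A^1\otimes\bar A^0)\otimes(A^0\otimes\bar A^1) \to A^1\otimes\bar A^1$, which is an isomorphism, so $\nabla_{A\otimes\bar A}$ sends a dual basis element of $A^1\otimes\bar A^1$ to the corresponding wedge $x\wedge\bar x$ with $x\in A_1$, $\bar x\in\bar A_1$; meanwhile the differential $D$ kills $A^1\otimes\bar A^1$ (it lands in degree three, not relevant), or more precisely the component $d_1$ of $\partial$ on this summand vanishes because $D^1\colon (A\otimes\bar A)^1\to(A\otimes\bar A)^2$ has no component landing in $A^1\otimes\bar A^1$ — indeed $D(a\otimes 1) = da\otimes 1 \in A^2\otimes\bar A^0$ and $D(1\otimes\bar a)=1\otimes\bar d\bar a \in A^0\otimes\bar A^2$. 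Hence the image of the middle summand under $\partial_{A\otimes\bar A}$ is spanned by the brackets $[x,\bar x]$, $x\in A_1$, $\bar x\in\bar A_1$, giving exactly the relations $[A_1,\bar A_1]=0$. Collecting the three contributions shows the relation ideal is generated by $\partial_A(A_2)$, $\partial_{\bar A}(\bar A_2)$, and $[A_1,\bar A_1]$, which is the claim.

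The main obstacle I anticipate is bookkeeping with the Koszul signs in the definition of the tensor-product multiplication and differential: the sign $(-1)^{\abs{\bar a}\abs{b}}$ in the product and $(-1)^{\abs a}$ in $D$ must be tracked carefully to confirm that the dualized maps on the $A_2$ and $\bar A_2$ summands really coincide with $\partial_A$ and $\partial_{\bar A}$ (up to sign, which does not affect the generated ideal), and that no unexpected cross-terms appear. A secondary, more conceptual point to handle cleanly is the identification $(A\otimes\bar A)_i \cong \bigoplus_{j+k=i}A_j\otimes\bar A_k$ and the compatibility of $\nabla$ and $d_1$ with this decomposition; this is where connectedness of both factors ($A^0=\bar A^0=\C$) is used, and where one must be careful that the finite-type hypothesis guarantees the relevant duals behave well. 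Once these sign and duality checks are in place, the proof reduces to reading off the three families of relations summand by summand.
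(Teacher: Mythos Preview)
Your proposal is correct and follows essentially the same approach as the paper: decompose $(A\otimes\bar A)^1$ and $(A\otimes\bar A)^2$ using connectedness, observe that $D^1$ and the multiplication map respect this decomposition (with the multiplication being the identity on the $A^1\otimes\bar A^1$ summand and $D^1$ having no component landing there), and then dualize to read off the three families of relations. Your write-up is in fact more detailed than the paper's, which dispatches the argument in a few lines; your anticipated sign and duality checks are routine and do not affect the outcome.
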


\begin{proof}
By construction, $(A\otimes \bar{A})^1= A^1 \oplus \bar{A}^1$ and
$(A\otimes \bar{A})^2= A^2 \oplus \bar{A}^2 \oplus (A^1 \otimes \bar{A}^1)$.
Plainly, $D^1$ restricts to $d^1$ on $A^1$ and to $\bar{d}^1$ on $\bar{A}^1$.
It is readily seen that the multiplication map on $A\otimes \bar{A}$ restricts to 
the multiplication maps on $A^1 \wedge A^1$ on $\bar{A}^1 \wedge \bar{A}^1$, 
respectively, and to the identity map on $A^1 \otimes \bar{A}^1$. By taking 
duals, we conclude that $\h(A\otimes \bar{A})$ has the 
asserted presentation. 
\end{proof}

\begin{corollary}
\label{cor:holoprod}
The holonomy Lie algebra of a tensor product of \cdga's  
is isomorphic to the (categorical) product of the respective 
holonomy Lie algebras,
\[
\h(A\otimes \bar{A}) \cong \h(A) \times \h(\bar{A}).
\]
\end{corollary}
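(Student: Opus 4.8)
The plan is to deduce Corollary \ref{cor:holoprod} directly from the presentation obtained in Proposition \ref{prop:prodpres} by checking that this presentation is exactly the one satisfied by the categorical product in the category of Lie algebras. The key point is that the category of Lie algebras has all finite products, and the product $\h \times \bar\h$ (with the componentwise bracket) can be described by generators and relations whenever $\h$ and $\bar\h$ are: if $\h = \Lie(X)/(R)$ and $\bar\h = \Lie(\bar X)/(\bar R)$, then $\h \times \bar\h$ is generated by $X \sqcup \bar X$ subject to the relations $R$, $\bar R$, and $[X, \bar X] = 0$. So the plan is really just to verify this folklore fact and match it up with Proposition \ref{prop:prodpres}.

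First I would record the presentation of the categorical product. Write $\h(A) = \Lie(A_1)/(\im \partial_A)$ and $\h(\bar A) = \Lie(\bar A_1)/(\im \partial_{\bar A})$ by Definition \ref{def:holo cdga}. Let $\mathfrak{q}$ be the Lie algebra generated by $A_1 \oplus \bar A_1$ subject to $\partial_A(A_2) = 0$, $\partial_{\bar A}(\bar A_2) = 0$, and $[A_1, \bar A_1] = 0$; by Proposition \ref{prop:prodpres}, $\h(A \otimes \bar A) \cong \mathfrak{q}$. I would then construct mutually inverse maps between $\mathfrak{q}$ and $\h(A) \times \h(\bar A)$. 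In one direction, the inclusions $A_1 \inj \mathfrak{q}$ and $\bar A_1 \inj \mathfrak{q}$ send the relators $\im \partial_A$ and $\im \partial_{\bar A}$ to zero, so by the universal property of the quotient presentations of $\h(A)$ and $\h(\bar A)$ they induce Lie algebra maps $\h(A) \to \mathfrak{q}$ and $\h(\bar A) \to \mathfrak{q}$; since $[A_1, \bar A_1] = 0$ in $\mathfrak{q}$, these two maps have commuting images, hence by the universal property of the product they assemble into a map $\h(A) \times \h(\bar A) \to \mathfrak{q}$. In the other direction, the two projections $\h(A) \times \h(\bar A) \to \h(A)$ and $\to \h(\bar A)$, precomposed with $A_1 \to \h(A)$ and $\bar A_1 \to \h(\bar A)$, give a linear map $A_1 \oplus \bar A_1 \to \h(A) \times \h(\bar A)$; one checks it kills all three families of relators (the first two because the corresponding component already vanishes in $\h(A)$ or $\h(\bar A)$, the third because elements of $A_1$ land in the first factor and elements of $\bar A_1$ in the second, which commute in a product Lie algebra), so it induces $\mathfrak{q} \to \h(A) \times \h(\bar A)$. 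Finally I would check the two composites are the identity on generators, hence on all of each Lie algebra.

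The main obstacle — really the only subtlety — is making sure the universal-property bookkeeping is airtight in the \emph{ungraded} setting: as emphasized after Remark \ref{rem:holo chen}, the ideal $(\im \partial_A)$ need not be homogeneous, so I cannot argue degreewise and must work purely at the level of presentations by generators and relations, using only the universal property of a free Lie algebra modulo an ideal. A secondary point worth a sentence is that the isomorphism of Corollary \ref{cor:holoprod} is compatible with the structure maps, i.e. under $\h(A \otimes \bar A) \cong \h(A) \times \h(\bar A)$ the canonical map $\h(A) \to \h(A \otimes \bar A)$ induced by $A \to A \otimes \bar A$, $a \mapsto a \otimes 1$, corresponds to the inclusion $\h(A) \inj \h(A) \times \h(\bar A)$ of the first factor; this follows by tracking generators through the maps constructed above, and it is what makes the result genuinely an identification of categorical products. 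Everything else is the routine verification that the two constructed morphisms are mutually inverse.
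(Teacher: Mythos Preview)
Your approach is correct and is exactly the intended deduction: the paper states the corollary without proof, as an immediate consequence of the presentation in Proposition~\ref{prop:prodpres}, and you have simply spelled out why that presentation coincides with the one for the direct product of the two holonomy Lie algebras.

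One small correction of language: when you assemble the two maps $\h(A)\to\mathfrak{q}$ and $\h(\bar A)\to\mathfrak{q}$ with commuting images into a single map $\h(A)\times\h(\bar A)\to\mathfrak{q}$, this is \emph{not} the universal property of the categorical product (which governs maps \emph{into} the product, not out of it). What you are actually using is the elementary fact that for Lie algebra maps $f\colon\h\to L$ and $g\colon\bar\h\to L$ with $[f(\h),g(\bar\h)]=0$, the assignment $(x,\bar x)\mapsto f(x)+g(\bar x)$ is a Lie algebra homomorphism $\h\times\bar\h\to L$; equivalently, $\h\times\bar\h$ is the quotient of the coproduct $\h\coprod\bar\h$ by the ideal generated by $[\h,\bar\h]$. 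With that phrasing fixed, the argument goes through as written.
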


\subsection{Flat connections and products} 
\label{subset:flat prod}

Proposition \ref{prop:prodpres} also yields a formula for the representation 
variety of a tensor product of \cdga's.  

\begin{corollary}
\label{cor:homprod1}
For any Lie algebra $\g$, 
\begin{eqnarray*}
\Hom_{\Lie}(\h(A\otimes \bar{A}),\g) = \{ (\varphi, \bar{\varphi}) \in
\Hom_{\Lie}(\h(A),\g) \times \hspace*{0.8in}
\\[3pt]
 \Hom_{\Lie}(\h(\bar{A}),\g) \mid  
 [\varphi (x), \bar{\varphi}(\bar{x})]= 0 , \; \forall (x, \bar{x}) \in A_1 \oplus \bar{A}_1 \}.
\end{eqnarray*}
Furthermore, if $\g$ is abelian, then
\[
\Hom_{\Lie}(\h(A\otimes \bar{A}),\g) =
\Hom_{\Lie}(\h(A),\g) \times \Hom_{\Lie}(\h(\bar{A}),\g).
\]
\end{corollary}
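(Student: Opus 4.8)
The plan is to derive Corollary \ref{cor:homprod1} directly from the presentation of $\h(A\otimes\bar A)$ furnished by Proposition \ref{prop:prodpres}. A Lie algebra morphism $\Phi\colon \h(A\otimes\bar A)\to\g$ is completely determined by its restriction to the generating set $A_1\oplus\bar A_1$, and conversely a linear map $A_1\oplus\bar A_1\to\g$ extends to a Lie morphism on the free Lie algebra $\Lie(A_1\oplus\bar A_1)$ and then descends to $\h(A\otimes\bar A)$ precisely when it kills every relator. Splitting the linear map as $(\varphi_0,\bar\varphi_0)$ with $\varphi_0\colon A_1\to\g$ and $\bar\varphi_0\colon\bar A_1\to\g$, the three families of relations listed in Proposition \ref{prop:prodpres} translate as follows: $\partial_A(A_2)=0$ says exactly that the Lie morphism $\Lie(A_1)\to\g$ induced by $\varphi_0$ factors through $\h(A)$, i.e.\ determines a genuine $\varphi\in\Hom_{\Lie}(\h(A),\g)$; similarly $\partial_{\bar A}(\bar A_2)=0$ gives $\bar\varphi\in\Hom_{\Lie}(\h(\bar A),\g)$; and $[A_1,\bar A_1]=0$ becomes the commutation condition $[\varphi(x),\bar\varphi(\bar x)]=0$ for all $(x,\bar x)\in A_1\oplus\bar A_1$. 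This establishes the asserted bijection.

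For the first, main assertion I would write this out in two directions. Given $\Phi\in\Hom_{\Lie}(\h(A\otimes\bar A),\g)$, precompose with the canonical maps $\h(A)\to\h(A\otimes\bar A)$ and $\h(\bar A)\to\h(A\otimes\bar A)$ coming from the inclusions of generators (these exist because, by the presentation, the defining relators of $\h(A)$ and of $\h(\bar A)$ map to relators of $\h(A\otimes\bar A)$) to obtain the pair $(\varphi,\bar\varphi)$; the relation $[A_1,\bar A_1]=0$ in $\h(A\otimes\bar A)$ forces $[\varphi(x),\bar\varphi(\bar x)]=0$. Conversely, given such a compatible pair, the universal property of the free Lie algebra produces a morphism $\Lie(A_1\oplus\bar A_1)\to\g$ sending $x\mapsto\varphi(x)$ and $\bar x\mapsto\bar\varphi(\bar x)$; one checks it annihilates all three relator families — $\partial_A(A_2)$ and $\partial_{\bar A}(\bar A_2)$ because $\varphi,\bar\varphi$ are already defined on the quotients $\h(A),\h(\bar A)$, and $[A_1,\bar A_1]$ by the hypothesis on the pair — hence it descends to the desired $\Phi\in\Hom_{\Lie}(\h(A\otimes\bar A),\g)$. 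These two constructions are plainly mutually inverse, giving the set equality.

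The abelian case is then immediate: if $\g$ is abelian, every bracket in $\g$ vanishes, so the constraint $[\varphi(x),\bar\varphi(\bar x)]=0$ is automatically satisfied, and the fiber-product description collapses to the plain cartesian product $\Hom_{\Lie}(\h(A),\g)\times\Hom_{\Lie}(\h(\bar A),\g)$. (Equivalently, one may invoke Corollary \ref{cor:holoprod} together with the fact that $\Hom_{\Lie}(-,\g)$ turns categorical products of Lie algebras into products of sets when $\g$ is abelian — but the direct argument is shorter.)

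I do not anticipate a serious obstacle here: the statement is essentially a formal unwinding of Proposition \ref{prop:prodpres} via the universal property of free Lie algebras. The one point requiring a little care is making sure that the compatibility condition is stated symmetrically over all of $A_1\oplus\bar A_1$ — that is, that it suffices to impose $[\varphi(x),\bar\varphi(\bar x)]=0$ on generators $x\in A_1$, $\bar x\in\bar A_1$ rather than on all of $\h(A)\times\h(\bar A)$ — but this is exactly what the presentation guarantees, since the ideal generated by $[A_1,\bar A_1]$ already contains all the higher brackets between the two subalgebras once $\varphi$ and $\bar\varphi$ are Lie morphisms.
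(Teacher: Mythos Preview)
Your proposal is correct and matches the paper's approach: the corollary is stated there without proof, as an immediate consequence of the presentation in Proposition~\ref{prop:prodpres}, and what you have written is precisely the formal unwinding of that presentation via the universal property of free Lie algebras. Your argument is more detailed than what the paper supplies, but the route is the same.
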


For the simple Lie algebra $\sl_2$ and its Borel subalgebra $\solv_2$, 
the above corollary can be made more explicit.

\begin{corollary}
\label{cor:homprod2}
If $\g=\sl_2$ or $\solv_2$, then
\begin{eqnarray*}
\Hom_{\Lie}(\h(A\otimes \bar{A}),\g) =
\{0\}\times  \Hom_{\Lie}(\h(\bar{A}),\g) \cup \hspace*{0.5in}\\
\Hom_{\Lie}(\h(A),\g) \times \{0\} \cup
\Hom_{\Lie}^1(\h(A\otimes \bar{A}),\g).
\end{eqnarray*}
\end{corollary}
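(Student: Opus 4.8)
The plan is to deduce this from Corollary \ref{cor:homprod1} by analyzing the commuting condition $[\varphi(x),\bar\varphi(\bar x)]=0$ for all $x\in A_1$, $\bar x\in \bar A_1$, using the special structure of $\g=\sl_2$ or $\solv_2$. Write $U=\im(\varphi)$ and $\bar U=\im(\bar\varphi)$; these are Lie subalgebras of $\g$, and the condition says $[U,\bar U]=0$, i.e. $\bar U$ lies in the centralizer $\mathfrak{z}_\g(U)$ and vice versa. So the whole task reduces to a purely Lie-theoretic claim: if $U,\bar U\subseteq \g$ are subalgebras with $[U,\bar U]=0$, then either $U=0$, or $\bar U=0$, or both $U$ and $\bar U$ are (at most) $1$-dimensional. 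Granting that, a pair $(\varphi,\bar\varphi)$ with $[U,\bar U]=0$ falls into one of the three listed strata: $U=0$ puts it in $\{0\}\times\Hom_{\Lie}(\h(\bar A),\g)$; $\bar U=0$ puts it in $\Hom_{\Lie}(\h(A),\g)\times\{0\}$; and if neither vanishes then $\dim U\le 1$ and $\dim\bar U\le 1$, so the combined morphism $\h(A\otimes\bar A)\to\g$, whose image is $U+\bar U=U\oplus\bar U$ — wait, this is where care is needed — has image of dimension at most $2$, and I must check it is actually at most $1$-dimensional to land in $\Hom_{\Lie}^1$. Conversely, each of the three sets on the right is visibly contained in the left-hand side: the first two because one factor is trivial (so the bracket condition is automatic), the third by definition of $\Hom_{\Lie}^1$ together with Corollary \ref{cor:holoprod} identifying $\h(A\otimes\bar A)$ with the product.

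For the Lie-theoretic claim, I would argue case by case. For $\solv_2$, which is $2$-dimensional non-abelian with basis $h,e$ and $[h,e]=e$: its only abelian subalgebras are $0$, the lines $\C e$, $\C(h+ce)$, and... actually every $1$-dimensional subspace is a subalgebra, and the only $2$-dimensional subalgebra is $\solv_2$ itself, which is not abelian. If $U$ and $\bar U$ both contain a $2$-dimensional subalgebra they equal $\solv_2$, forcing $[\solv_2,\solv_2]=0$, false. If $\dim U\ge 1$ and $\dim\bar U\ge 1$, pick $0\ne u\in U$, $0\ne \bar u\in\bar U$ with $[u,\bar u]=0$; in $\solv_2$ the centralizer of any nonzero non-central element is exactly the line it spans (there is no center), so $\bar u\in\C u$, hence $U\cap\bar U\ne 0$ and any element of $U$ commutes with $u$, giving $U\subseteq\C u$ and likewise $\bar U\subseteq\C u$ — so both are $1$-dimensional and in fact equal. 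Moreover $U=\bar U=\C u$ means the image of the product morphism is $\C u$, genuinely $1$-dimensional, so we really are in $\Hom_{\Lie}^1(\h(A\otimes\bar A),\g)$. For $\sl_2$, the key input is the classical fact that a nonzero element $u\in\sl_2$ has centralizer $\mathfrak{z}_{\sl_2}(u)$ equal to $\C u$ when $u$ is semisimple or nilpotent (the generic cases), and there is no nonzero $u$ whose centralizer is larger than a line except... in fact for every $0\ne u\in\sl_2$ one has $\dim\mathfrak{z}_{\sl_2}(u)=1$. So again if $U\ne 0\ne\bar U$, choosing $0\ne u\in U$ we get $\bar U\subseteq\mathfrak{z}_{\sl_2}(u)=\C u$, so $\bar U=\C u$ is $1$-dimensional, and symmetrically (choosing $0\ne\bar u=u'$... since $\bar U=\C u$, take $u$ itself) $U\subseteq\mathfrak{z}_{\sl_2}(u)=\C u$, so $U=\C u$ too; the product morphism then has $1$-dimensional image $\C u$.

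Assembling: I would first invoke Corollary \ref{cor:homprod1} to identify $\Hom_{\Lie}(\h(A\otimes\bar A),\g)$ with the set of commuting pairs, then Corollary \ref{cor:holoprod} (or Proposition \ref{prop:prodpres} directly) to identify such a pair with a single morphism $\h(A\otimes\bar A)\to\g$ whose restrictions to $\h(A)$ and $\h(\bar A)$ are $\varphi$ and $\bar\varphi$ and whose image is $\im\varphi+\im\bar\varphi$. Then I partition the commuting pairs by whether $\im\varphi=0$, $\im\bar\varphi=0$, or neither, and the Lie-theoretic lemma shows the "neither" case forces both images to be a common line, hence the morphism has $\le 1$-dimensional image. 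The reverse inclusions are immediate. I expect the main obstacle to be the bookkeeping in the centralizer argument — specifically making sure that "$[U,\bar U]=0$ with both nonzero" really forces $U=\bar U$ to be the \emph{same} line (not just each being a line), since only then is the image of the combined morphism $1$-dimensional rather than $2$-dimensional; this hinges on the fact that in both $\sl_2$ and $\solv_2$ the centralizer of a nonzero element is exactly $1$-dimensional, which should be stated and used explicitly. A secondary point worth a sentence is that $\Hom_{\Lie}^1(\h(A\otimes\bar A),\g)$ already contains the pairs where one image is $0$ and the other is $\le 1$-dimensional, so the three sets in the statement overlap — the equality is as sets, with no claim of disjointness.
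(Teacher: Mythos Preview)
Your proposal is correct and follows essentially the same approach as the paper: both reduce to the commuting condition from Corollary~\ref{cor:homprod1} and then invoke the key Lie-theoretic fact that in $\sl_2$ or $\solv_2$ the centralizer of any nonzero element is the line it spans (the paper phrases this as ``for any $0\ne g,h\in\g$, $[g,h]=0$ iff $g=\lambda h$ for some $\lambda\in\C^\times$''). The paper's write-up is more compressed---it works directly with basis elements and states the centralizer fact in one line---but your more careful treatment of why the combined image is genuinely $1$-dimensional (forcing $U=\bar U=\C u$, not just each being some line) is a worthwhile clarification.
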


\begin{proof}
The inclusion $\supseteq$ is clear. To prove the reverse inclusion, 
fix bases $\{ x_1,\dots,x_n\}$ and $\{ \bar{x}_1,\dots, \bar{x}_m\}$ 
for $A_1$ and $\bar{A}_1$.  
Let $\varphi \colon \h(A\otimes \bar{A}) \to \g$ be a morphism 
of Lie algebras, and suppose there are indices $i$ and $j$ 
such that $\varphi (x_i)\neq 0$ and $\varphi (\bar{x}_j)\neq 0$.  
We need to prove that the family 
$\{ \varphi (x_1), \dots$,  $\varphi (x_n), \varphi (\bar{x}_1), 
\dots, \varphi (\bar{x}_m) \}$ has rank $1$.  

We know from Corollary \ref{cor:homprod1} that 
$[\varphi (x_k), \varphi (\bar{x}_l)]=0$, for all $k\in [n]$ 
and $l\in [m]$. Now note that, for any $0\neq g,h \in \g$, 
the following holds: $[g,h]=0$ if and only if $g=\lambda h$, 
for some $\lambda \in \C^{\times}$.  The desired conclusion 
is now immediate.
\end{proof}

\subsection{Resonance and products} 
\label{subsec:resprod}

We now turn to the jump loci of a tensor product of 
\cdga's.  We start with a general upper bound for the depth $1$ 
resonance varieties.

\begin{theorem}
\label{thm:res prod}
For any representation $\theta\colon \g\to \gl(V)$,
\[
\RR^q_1(A\otimes \bar{A}, \theta) \subseteq 
\bigg( \bigg( \bigcup_{i\le q} \RR^i_1(A,\theta)\bigg) \times 
\bigg( \bigcup_{j\le q} \RR^j_1(\bar{A},\theta)\bigg) \bigg)  \cap
\F(A\otimes \bar{A}, \g). 
\]
\end{theorem}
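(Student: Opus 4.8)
The plan is to exploit the fact that the Aomoto complex of the tensor product $(A\otimes\bar A)\otimes V$ at a flat connection $\omega\in\F(A\otimes\bar A,\g)$ is a twisted tensor product of the Aomoto complexes of the two factors. First I would observe that, since $(A\otimes\bar A)^1=A^1\oplus\bar A^1$, any flat connection $\omega$ on $A\otimes\bar A$ decomposes as $\omega=(\omega_A,\omega_{\bar A})$ with $\omega_A\in A^1\otimes\g$ and $\omega_{\bar A}\in\bar A^1\otimes\g$; by Proposition \ref{prop:prodpres} (equivalently Corollary \ref{cor:homprod1}) flatness of $\omega$ forces $\omega_A\in\F(A,\g)$ and $\omega_{\bar A}\in\F(\bar A,\g)$ separately, plus the commuting condition $[\varphi(x),\bar\varphi(\bar x)]=0$ on the corresponding morphisms. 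This already shows $\F(A\otimes\bar A,\g)$ projects into $\F(A,\g)\times\F(\bar A,\g)$, so the intersection with $\F(A\otimes\bar A,\g)$ on the right-hand side makes sense and it suffices to prove the pure containment into $\bigcup_{i\le q}\RR^i_1(A,\theta)$ in the first coordinate and $\bigcup_{j\le q}\RR^j_1(\bar A,\theta)$ in the second.

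Next I would write down the covariant derivative $d_\omega$ explicitly using \eqref{eq=dcoord}. Because $\omega=\omega_A+\omega_{\bar A}$ and multiplication in $A\otimes\bar A$ restricts to the identity on $A^1\otimes\bar A^1$, the operator $d_\omega$ on $(A\otimes\bar A)\otimes V\cong A\otimes(\bar A\otimes V)$ splits as $d_\omega = d_{\omega_A}\otimes\id \pm \id\otimes d_{\omega_{\bar A}}$ — that is, $(A\otimes\bar A\otimes V,d_\omega)$ is (up to the usual Koszul sign) the tensor product over $\C$ of the cochain complexes $(A,d_{\omega_A}$ acting trivially on the missing $V$-slot$)$ — more precisely, the correct statement is that it is the total complex of the double complex $A^p\otimes(\bar A^q\otimes V)$ with horizontal differential $d\otimes\id_V$-free part coming from $A$ and vertical part the Aomoto differential $d_{\omega_{\bar A}}$, twisted along the diagonal by $\ad_{\omega_A}$. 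The cleanest route is to note that $(A,d_{\omega_A}\text{-type})\otimes_\C(\bar A\otimes V,d_{\omega_{\bar A}})$ is a tensor product of complexes of $\C$-vector spaces, hence by the algebraic Künneth theorem
\[
H^q\big((A\otimes\bar A)\otimes V,d_\omega\big)\ \cong\ \bigoplus_{i+j=q} H^i(A,d)\otimes H^j(\bar A\otimes V,d_{\omega_{\bar A}}),
\]
once one checks $\omega_A$ contributes only the untwisted $A$-factor — which is where the commuting condition $[\omega_A,\omega_{\bar A}]=0$ is used to make the bicomplex differentials genuinely anticommute. Here I would be slightly careful: the honest symmetric statement treats $\omega$ as acting on $A\otimes V$ on one side and $\bar A\otimes V$ on the other, and one must verify that the two twisted tensor factorizations are compatible; I expect to invoke the $A^{\hdot}$-module structure from \eqref{eq:mu} to organize this.

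Granting the Künneth decomposition, the conclusion is immediate: if $\omega\in\RR^q_1(A\otimes\bar A,\theta)$ then $H^q((A\otimes\bar A)\otimes V,d_\omega)\ne0$, so some summand $H^i(A,d)\otimes H^j(\bar A\otimes V,d_{\omega_{\bar A}})$ with $i+j=q$ is nonzero; then $H^j(\bar A\otimes V,d_{\omega_{\bar A}})\ne 0$ gives $\omega_{\bar A}\in\RR^j_1(\bar A,\theta)$ with $j\le q$, and by the symmetric factorization $\omega_A\in\RR^i_1(A,\theta)$ with $i\le q$. Hence $\omega=(\omega_A,\omega_{\bar A})$ lies in the asserted product, and of course in $\F(A\otimes\bar A,\g)$, proving the containment. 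The main obstacle, and the only place requiring genuine care rather than bookkeeping, is establishing the twisted Künneth factorization of the Aomoto complex: one must confirm that the mixed term $\ad_{\omega_A}$ and $\ad_{\omega_{\bar A}}$ appear additively (not with cross terms on $A^1\otimes\bar A^1$) and that the commutator relation coming from flatness is exactly what is needed for the total-complex differential to square to zero in a way compatible with the Künneth splitting; after that, everything reduces to the ordinary Künneth theorem over the field $\C$.
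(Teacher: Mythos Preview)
Your decomposition $\Omega=\omega_A+\omega_{\bar A}$ with both pieces flat is correct and matches the paper. The gap is the K\"unneth step: the Aomoto complex $\big((A\otimes\bar A)\otimes V,\,d_\Omega\big)$ is \emph{not} the tensor product of two cochain complexes, because the single copy of $V$ is acted on by both $\omega_A$ and $\omega_{\bar A}$. The complex $(A,d)\otimes_\C(\bar A\otimes V,d_{\omega_{\bar A}})$ has differential $d\otimes\id+\id\otimes d_{\omega_{\bar A}}$, which is missing the term $\ad_{\omega_A}$; the commutation relation $[\omega_A,\omega_{\bar A}]=0$ does not make that term vanish. So the isomorphism
\[
H^q\big((A\otimes\bar A)\otimes V,d_\Omega\big)\ \cong\ \bigoplus_{i+j=q} H^i(A,d)\otimes H^j(\bar A\otimes V,d_{\omega_{\bar A}})
\]
is false in general. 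A concrete counterexample is Example~\ref{ex:strict}: with $A=\bar A=\bigwedge\langle x\rangle$, $\g=\gl_2$, $\theta=\id$, take $\omega_A=x\otimes\mathrm{diag}(1,0)$ and $\omega_{\bar A}=\bar x\otimes\mathrm{diag}(0,1)$. Your formula gives $H^0\cong\C$, whereas in fact $H^0=\ker(\mathrm{diag}(1,0))\cap\ker(\mathrm{diag}(0,1))=0$. This is precisely why the theorem asserts only a containment, not an equality.

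What does work, and what the paper does, is to view $E_0^{i,j}=A^i\otimes\bar A^j\otimes V$ as a first-quadrant double complex with horizontal differential $d_{\omega_A}$ and vertical differential $d_{\omega_{\bar A}}$ (the commutation relation is exactly what makes these anticommute). The two associated spectral sequences have $E_1$-pages
\[
{}_{\rm hor}E_1^{i,j}=H^i(A\otimes V,d_{\omega_A})\otimes\bar A^j,
\qquad
{}_{\rm vert}E_1^{i,j}=A^i\otimes H^j(\bar A\otimes V,d_{\omega_{\bar A}}),
\]
and both converge to $H^{i+j}\big((A\otimes\bar A)\otimes V,d_\Omega\big)$. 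These spectral sequences need not degenerate at $E_1$ (indeed they do not in the example above), so no K\"unneth formula follows; but if $H^{\le q}(A\otimes V,d_{\omega_A})=0$ then ${}_{\rm hor}E_1^{i,j}=0$ for all $i+j=q$, forcing $H^q=0$, and symmetrically for $\bar A$. Taking contrapositives gives the asserted containment.
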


\begin{proof}
By Lemma \ref{lem:flat hom} and Corollary \ref{cor:homprod1}, 
every element $\Omega \in \F(A\otimes \bar{A}, \g)$ can be 
written as $\Omega=\omega + \bar{\omega}$,
for some $\omega \in \F(A, \g)$ and $\bar{\omega} \in \F(\bar{A}, \g)$.
Setting up a first-quadrant double complex with 
$E^{i,j}_0=A^i\otimes \bar{A}^j \otimes V$, 
horizontal differential $d_{\omega}\colon E^{i,j}_0 \to E^{i+1,j}_0$, 
and vertical differential $d_{\bar\omega}\colon E^{i,j}_0 \to E^{i,j+1}_0$, 
we obtain  spectral sequences starting at 
\begin{equation}
{}_{\rm hor}E^{i,j}_1=H^i(A\otimes V, d_{\omega})\otimes \bar{A}^j
\:\ {\rm and} \:\
{}_{\rm vert}E^{i,j}_1=A^i\otimes H^j(\bar{A}\otimes V, d_{\bar\omega}),
\end{equation}
respectively, and converging to 
$H^{i+j}( A\otimes\bar{A}\otimes V, d_{\Omega})$.
See (\ref{eq=dcoord}). 

Consequently, if either $H^{\le q} ( A\otimes V, d_{\omega})$ or 
$H^{\le q} ( \bar{A}\otimes V,  d_{\bar\omega})$ vanishes, then 
$H^{q} ( A\otimes\bar{A}\otimes V, d_{\Omega})=0$.  
In view of definition (\ref{eq:rra}), this completes the proof.
\end{proof}

In general, the inclusion from Theorem \ref{thm:res prod} is strict.  
We illustrate this phenomenon with a simple example.

\begin{example}
\label{ex:strict}
Let $A$ be the exterior algebra on a single 
generator in degree $1$, let $\g=\gl_2$, and let $\theta=\id_\g$. 
Using Example \ref{ex:r0} and Corollary \ref{cor:homprod1}, 
we see that $\RR^0_1(A,\theta)=\{ g\in \gl_2 \mid \det(g) = 0\}$, yet 
\[
\RR^0_1(A\otimes A,\theta)=\{ (g,h)\in \gl_2 \times \gl_2 \mid [g,h]=0, \,
\rank (g \,|\, h) <2\},
\] 
which is a proper subset of 
$(\RR^0_1(A,\theta) \times \RR^0_1(A,\theta)) \cap \F(A\otimes A, \g)$. 
\end{example}

\subsection{Product formulas for resonance} 
\label{subsec:prod res formula}

Under certain additional hypotheses, the upper bound from 
Theorem \ref{thm:res prod} may be improved to an equality.
First, as shown in \cite[Proposition 13.1]{PS-plms}, such an 
equality holds in the formal, rank $1$ case.

\begin{proposition}[\cite{PS-plms}]
\label{prop:prod1}
Assume both $A$ and $\bar{A}$ have zero differential. Then
\[
\RR^q_1(A\otimes \bar{A}) = 
\bigcup_{i+j=q} \RR^i_1(A) \times \RR^j_1(\bar{A}).
\]
\end{proposition}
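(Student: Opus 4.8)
\textbf{Proof proposal for Proposition \ref{prop:prod1}.}
The plan is to exploit the fact that when $d=\bar{d}=0$, the space of flat $\C$-connections is the whole of $(A\otimes\bar{A})^1 = A^1\oplus\bar{A}^1$, so every parameter $\Omega$ splits uniquely as $\Omega=\omega+\bar\omega$ with $\omega\in A^1$, $\bar\omega\in\bar{A}^1$, and the condition $[\omega,\bar\omega]=0$ from Corollary \ref{cor:homprod1} is vacuous for the abelian Lie algebra $\C$. The inclusion $\supseteq$ already follows from the contrapositive of Theorem \ref{thm:res prod}: if $\omega\in\RR^i_1(A)$ and $\bar\omega\in\RR^j_1(\bar{A})$ with $i+j=q$, I want the reverse of the vanishing argument, so a sharper tool than the spectral-sequence collapse of that proof is needed.

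The key step is a K\"unneth-type computation for the twisted (Aomoto) complexes. With zero differentials and rank-one coefficients ($V=\C$, $\theta=\id$), the differential $d_\omega$ on $A\otimes\C = A$ is simply left-multiplication by $\omega\in A^1$, and similarly $d_{\bar\omega}$ is multiplication by $\bar\omega$; moreover the twisted differential $d_\Omega$ on $A\otimes\bar{A}$ is multiplication by $\omega+\bar\omega$. So $(A\otimes\bar{A},d_\Omega)$ is literally the tensor product of cochain complexes $(A,\omega\cdot)\otimes(\bar{A},\bar\omega\cdot)$ — one checks the Leibniz signs work out because $\omega,\bar\omega$ have odd degree and one is multiplying on a fixed side. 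Since we are over the field $\C$, the algebraic K\"unneth theorem applies without Tor terms, giving
\[
H^q(A\otimes\bar{A},d_\Omega)\;\cong\;\bigoplus_{i+j=q} H^i(A,\omega\cdot)\otimes H^j(\bar{A},\bar\omega\cdot).
\]
Therefore $\dim H^q(A\otimes\bar{A},d_\Omega)\ge 1$ if and only if there is a pair $(i,j)$ with $i+j=q$ and both $H^i(A,d_\omega)\ne 0$ and $H^j(\bar{A},d_{\bar\omega})\ne 0$, i.e.\ $\omega\in\RR^i_1(A)$ and $\bar\omega\in\RR^j_1(\bar{A})$. Unwinding definition \eqref{eq:rra} for $m=1$ gives exactly the claimed equality of subsets of $A^1\oplus\bar{A}^1=\F(A\otimes\bar{A},\C)$.

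The main obstacle is purely bookkeeping: verifying that $(A\otimes\bar{A},d_\Omega)$ really is the tensor product of the two twisted complexes as \emph{complexes}, with the correct Koszul signs coming from the graded-commutative multiplication on $A\otimes\bar{A}$ (the sign $(-1)^{|\bar{a}||b|}$ in the product and the sign $(-1)^{|a|}$ in $D$), so that the K\"unneth isomorphism is legitimate; once that identification is in place, everything else is the standard field-coefficient K\"unneth formula and a translation of hypotheses. One should also note this argument is consistent with, and in fact refines, Theorem \ref{thm:res prod} in the $d=0$, rank-one case, since the union over $i\le q$ and $j\le q$ there is cut down to the diagonal constraint $i+j=q$ precisely because the K\"unneth formula tracks cohomological degree exactly.
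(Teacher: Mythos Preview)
Your argument is correct. The paper does not actually prove this proposition: it is quoted verbatim from \cite[Proposition~13.1]{PS-plms}, so there is no in-paper proof to compare against. Your K\"unneth approach is exactly the natural one and is almost certainly what is done in the cited reference: once $d=\bar d=0$ and $\g=\C$, the covariant derivative $d_\Omega$ on $A\otimes\bar A$ is left multiplication by $\omega\otimes 1 + 1\otimes\bar\omega$, and the Koszul sign in the product on $A\otimes\bar A$ gives precisely $d_\Omega(a\otimes\bar a)=(\omega a)\otimes\bar a + (-1)^{|a|}a\otimes(\bar\omega\,\bar a)$, so $(A\otimes\bar A,d_\Omega)=(A,d_\omega)\otimes(\bar A,d_{\bar\omega})$ as cochain complexes; K\"unneth over $\C$ then yields the equality.

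One small wording issue: your opening sentence suggests that $\supseteq$ ``already follows from the contrapositive of Theorem~\ref{thm:res prod}'', but that theorem only gives $\subseteq$ into a larger set and cannot produce the reverse inclusion. You immediately recognize this and pivot to the K\"unneth computation, which handles both inclusions at once, so the lapse is harmless; just drop or rephrase that sentence.
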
 

Using this result, we now show that an analogous resonance 
formula holds for the non-abelian Lie algebras $\sl_2$ and $\solv_2$.

\begin{theorem}
\label{thm:prodres2}
Assume both $A$ and $\bar{A}$ have zero differential, and 
$\g=\sl_2$ or $\solv_2$. Then, for any representation $\theta\colon \g\to \gl(V)$, 
\[
\RR^q_1(A\otimes \bar{A}, \theta) = 
\Big( \bigcup_{i+j=q} \RR^i_1(A, \theta) \times \RR^j_1(\bar{A}, \theta) \Big)
\cap \F(A\otimes \bar{A}, \g). 
\]
\end{theorem}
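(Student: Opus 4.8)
The plan is to prove the two inclusions separately, with the hard work going into the reverse inclusion $\supseteq$, since Theorem \ref{thm:res prod} (together with the observation that $\RR^i_1(A,\theta)\times\RR^j_1(\bar A,\theta)\subseteq (\bigcup_i\RR^i_1(A,\theta))\times(\bigcup_j\RR^j_1(\bar A,\theta))$) already gives a containment close to $\subseteq$. For the forward inclusion, I would take $\Omega\in\RR^q_1(A\otimes\bar A,\theta)$ and write $\Omega=\omega+\bar\omega$ as in the proof of Theorem \ref{thm:res prod}; I then need to upgrade the conclusion there from ``$\omega\in\bigcup_i\RR^i_1(A,\theta)$ and $\bar\omega\in\bigcup_j\RR^j_1(\bar A,\theta)$'' to ``$\omega\in\RR^i_1(A,\theta)$ and $\bar\omega\in\RR^j_1(\bar A,\theta)$ for some $i+j=q$.'' This sharper statement should follow from the fact that when $d=0$ the Aomoto complex $(A\otimes V,d_\omega)$ is itself a complex of graded pieces, so the spectral sequence of the double complex $E_0^{i,j}=A^i\otimes\bar A^j\otimes V$ with differentials $d_\omega,d_{\bar\omega}$ degenerates at $E_2$: when $D=0$ on $A\otimes\bar A$, the total differential $d_\Omega$ splits compatibly with the internal bigrading, giving a Künneth-type isomorphism $H^q(A\otimes\bar A\otimes V,d_\Omega)\cong\bigoplus_{i+j=q}H^i(A\otimes V,d_\omega)\otimes H^j(\bar A\otimes V,d_{\bar\omega})$. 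From this isomorphism, $H^q\ne0$ forces some tensor factor $H^i(A\otimes V,d_\omega)\otimes H^j(\bar A\otimes V,d_{\bar\omega})$ to be nonzero with $i+j=q$, which is exactly $\omega\in\RR^i_1(A,\theta)$ and $\bar\omega\in\RR^j_1(\bar A,\theta)$.

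For the reverse inclusion, I would start with a pair $(\omega,\bar\omega)$ lying in $\RR^i_1(A,\theta)\times\RR^j_1(\bar A,\theta)$ with $i+j=q$ and also in $\F(A\otimes\bar A,\g)$, and I want $\Omega=\omega+\bar\omega\in\RR^q_1(A\otimes\bar A,\theta)$. The same Künneth isomorphism above immediately gives $H^q(A\otimes\bar A\otimes V,d_\Omega)\supseteq H^i(A\otimes V,d_\omega)\otimes H^j(\bar A\otimes V,d_{\bar\omega})\ne0$, so in fact \emph{both} inclusions reduce to the single Künneth statement for zero-differential \cdga's. So the real content of the theorem is: (a) establishing this Künneth isomorphism for the twisted Aomoto complexes when $D=0$, and (b) handling the interaction with the flatness constraint, i.e., making sure the decomposition $\Omega=\omega+\bar\omega$ is legitimate and that $\omega\in\F(A,\g)$, $\bar\omega\in\F(\bar A,\g)$ individually — but this last point is supplied verbatim by Lemma \ref{lem:flat hom} and Corollary \ref{cor:homprod1}.

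This suggests the hypothesis ``$\g=\sl_2$ or $\solv_2$'' should not actually be needed for the Künneth argument, which is puzzling given the statement. Let me reconsider: the subtlety must be that the Künneth isomorphism for twisted complexes requires \emph{some} input, and the role of Corollary \ref{cor:homprod2} is to give a clean description of $\F(A\otimes\bar A,\g)$ — namely that every flat connection on the product is either pulled back from one factor or is essentially rank one. So the actual structure of the proof is a case split over $\Omega\in\F(A\otimes\bar A,\g)$ using Corollary \ref{cor:homprod2}: if $\Omega\in\{0\}\times\F(\bar A,\g)$ then $\omega=0$, $H^0(A\otimes V,d_0)=V\ne0$ so $\omega\in\RR^0_1(A,\theta)$ trivially and the claim reduces to the statement for $\bar A$ alone; symmetrically if $\Omega\in\F(A,\g)\times\{0\}$; and in the remaining case $\Omega\in\F^1(A\otimes\bar A,\g)$ is essentially rank one, where I would invoke Lemma \ref{lem:res f1}(2) (recalling that $A\otimes\bar A$ has zero differential, since $D=0$ when $d=\bar d=0$, and that its rank-one resonance $\RR^q_1(A\otimes\bar A)$ is computed by Proposition \ref{prop:prod1}) to reduce the rank-one-connection calculation entirely to ordinary (untwisted) resonance of the factors plus the determinant condition on $\theta$. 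The main obstacle, then, is the essentially-rank-one case: I need to match up, on one hand, the condition ``$\eta\otimes g\in\RR^q_1(A\otimes\bar A,\theta)$ iff $\eta\in\RR^q_1(A\otimes\bar A)$ or $\det\theta(g)=0$'' from Lemma \ref{lem:res f1}(2) combined with $\RR^q_1(A\otimes\bar A)=\bigcup_{i+j=q}\RR^i_1(A)\times\RR^j_1(\bar A)$, against the target description $\bigcup_{i+j=q}\RR^i_1(A,\theta)\times\RR^j_1(\bar A,\theta)$, again using Lemma \ref{lem:res f1}(2) on each factor and bookkeeping the degenerate cases where one of the factor-connections is zero (where $H^0=V\ne0$ makes membership in $\RR^0_1$ automatic). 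Careful attention to these boundary cases, and to the fact that an essentially-rank-one connection on the product decomposes as $\eta\otimes g = (\eta'\otimes g)+(\bar\eta'\otimes g)$ with the \emph{same} $g$, is where the argument must be written out with care.
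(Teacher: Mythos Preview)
Your reconsidered approach---the case split via Corollary \ref{cor:homprod2} into the pullback cases $\Omega=\omega+0$ (or $0+\bar\omega$) and the essentially-rank-one case $\Omega=(\eta+\bar\eta)\otimes g$, the latter handled through Lemma \ref{lem:res f1}(2) and Proposition \ref{prop:prod1}---is exactly what the paper does, and your description of the bookkeeping (same $g$ on both factors, the degenerate $0\in\RR^i_1$ cases via \eqref{eq:zero}) is accurate.

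However, your first attempt is not merely ``puzzling'' but actually wrong, and it is worth understanding why rather than leaving it as a vague suspicion. The claimed K\"unneth isomorphism
\[
H^q\bigl((A\otimes\bar A)\otimes V,\, d_\Omega\bigr)\;\cong\;
\bigoplus_{i+j=q} H^i(A\otimes V, d_\omega)\otimes H^j(\bar A\otimes V, d_{\bar\omega})
\]
is false in general: the complex on the left has a single copy of $V$, whereas the tensor product on the right has coefficients $V\otimes V$. Concretely, in the double complex $E_0^{i,j}=A^i\otimes\bar A^j\otimes V$ the vertical differential $d_{\bar\omega}$ still acts on the $V$-factor, so the $E_1$-page $H^i(A\otimes V,d_\omega)\otimes\bar A^j$ carries a $d_1$ that twists the $V$-part of $H^i(A\otimes V,d_\omega)$; this does not compute $H^j(\bar A\otimes V,d_{\bar\omega})$. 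Example \ref{ex:strict} (with $\g=\gl_2$) already exhibits a strict inclusion in degree $0$, so no such K\"unneth formula can hold without restricting $\g$. The genuine K\"unneth statement you need, and which the paper uses, is only the degenerate one available when $\omega=0$ (or $\bar\omega=0$): then $d_\Omega$ acts trivially on $A$, and
\[
H^q\bigl((A\otimes\bar A)\otimes V,\, d_{\bar\omega}\bigr)=\bigoplus_{i+j=q} A^i\otimes H^j(\bar A\otimes V, d_{\bar\omega}),
\]
which is what makes the pullback case go through.
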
 

\begin{proof}
Proof of inclusion $\subseteq$. 
Let $\Omega \in \RR^q_1(A\otimes \bar{A}, \theta)$. 
In view of Lemma \ref{lem:res f1} and Corollary \ref{cor:homprod2}, 
there are two cases to consider:  
either $\Omega = \omega \in \F(A, \g)$ (the case 
$\Omega = \bar{\omega} \in \F(\bar{A}, \g)$ being similar), or
$\Omega =(\eta + \bar{\eta}) \otimes g$. 

In the first case,  $H^{q}( A\otimes\bar{A}\otimes V, d_{\Omega})= 
\bigoplus_{i+j=q} H^i(A\otimes V, d_{\omega})\otimes \bar{A}^j$.
Hence, $\Omega \in \RR^q_1(A\otimes \bar{A}, \theta)$ if and 
only if there exist indices $i$ and $j$ 
with $i+j=q$ such that $\omega \in \RR^i_1(A, \theta)$ 
and $\bar{A}^j\neq 0$. Therefore, $\Omega = \omega +0 \in 
\RR^i_1(A, \theta) \times \RR^j_1(\bar{A}, \theta)$, as asserted;
see (\ref{eq:zero}). 

In the second case, suppose $\Omega \in \RR^q_1(A\otimes \bar{A}, \theta)$. 
There exist then indices $i$ and $j$ with $i+j=q$ such that $A^i$ and  
$\bar{A}^j$ are non-zero. If $\det \theta(g)=0$, then we must have 
$\eta \otimes g\in \RR^i_1(A, \theta)$ and $\bar{\eta} \otimes g\in 
\RR^j_1(\bar{A}, \theta)$, and so we are done. Otherwise, 
Proposition \ref{prop:prod1} implies that 
$\eta + \bar{\eta} \in \bigcup_{i+j=q} \RR^i_1(A) \times \RR^j_1(\bar{A})$. 
Therefore, $\eta \otimes g\in \RR^i_1(A, \theta)$ and 
$\bar{\eta} \otimes g\in \RR^j_1(\bar{A}, \theta)$, 
for some  $i$ and $j$ with $i+j=q$. This 
completes the first half of the proof.

Proof of inclusion $\supseteq$. Again, we have to analyze the 
two cases from Corollary \ref{cor:homprod2}.
When $\Omega = \omega +0$ with $\omega  \in \F(A, \g)$, 
we know that $\omega \in \RR^i_1(A, \theta)$
and  $\bar{A}^j\neq 0$, for some  $i$ and $j$ with $i+j=q$. 
As before, we infer that 
$H^{q}( A\otimes\bar{A}\otimes V, d_{\Omega}) \supseteq  
H^i(A\otimes V, d_{\omega})\otimes \bar{A}^j \neq 0$.  
Hence, $\Omega \in \RR^q_1(A\otimes \bar{A}, \theta)$, as claimed. 

Finally, assume that $\Omega =(\eta + \bar{\eta}) \otimes g$, 
where $\eta \otimes g\in \RR^i_1(A, \theta)$ and 
$\bar{\eta} \otimes g\in \RR^j_1(\bar{A}, \theta)$,
for some  $i$ and $j$ with $i+j=q$. When $\det (\theta(g)) \neq 0$, 
we deduce that $\eta \in \RR^i_1(A)$ and $\bar{\eta} \in \RR^j_1(\bar{A})$, 
hence $\eta + \bar{\eta} \in \RR^q_1(A\otimes \bar{A})$, by
Proposition \ref{prop:prod1}. Consequently,  
$\Omega \in \RR^q_1(A\otimes \bar{A}, \theta)$,
as asserted. If $\det (\theta(g))=0$, the fact that $A^i$ and $\bar{A}^j$ 
are both non-zero forces $(A\otimes \bar{A})^q \neq 0$.  
Hence, once again, $\Omega \in \RR^q_1(A\otimes \bar{A}, \theta)$, 
and we are done.
\end{proof}

\section{Coproducts}
\label{sect:coprod}

In this final section, we study the way our various constructions 
behave under (finite) coproducts.

\subsection{Holonomy Lie algebras and coproducts}
\label{subsec:holo coprod}

Let $A=(A^{\hdot},d)$ and $\bar{A}=(\bar{A}^{\hdot},\bar{d})$ be 
two connected \cdga's.  Their wedge sum, $A\vee \bar{A}$, is a 
new connected \cdga, whose underlying graded vector space in 
positive degrees is $A^+\oplus \bar{A}^+$, with multiplication 
$(a,\bar{a})\cdot (b,\bar{b}) = (ab, \bar{a}\bar{b})$, and differential 
$D=d+\bar{d}$.

The definition is motivated by the wedge operation on pointed 
spaces, in which case we have a well-known isomorphism
\begin{equation}
\label{eq:coho wedge}
(H^{\hdot}(X \vee \overline{X}), D=0)\cong (H^{\hdot}(X), d=0) \vee 
(H^{\hdot}(\overline{X}), \overline{d}=0). 
\end{equation}

We now extend the coproduct formula for $1$-formal spaces 
from \cite[\S9]{DPS}, as follows. 

\begin{proposition}
\label{prop:coprodpres}
The holonomy Lie algebra $\h(A\vee \bar{A})$ is generated by 
$A_1 \oplus \bar{A}_1$, with relations $\partial_A (A_2)=0$ and 
$\partial_{\bar{A}} (\bar{A}_2)=0$.
\end{proposition}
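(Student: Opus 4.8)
The plan is to mimic the proof of Proposition \ref{prop:prodpres}, the only difference being that the wedge sum has a smaller degree-$2$ part than the tensor product, so the cross-term relation $[A_1,\bar A_1]=0$ disappears. First I would record, directly from the definition of $A\vee\bar A$, the low-degree data: in positive degrees the underlying space is $A^+\oplus\bar A^+$, so $(A\vee\bar A)^1 = A^1\oplus\bar A^1$ and $(A\vee\bar A)^2 = A^2\oplus\bar A^2$. Since the multiplication is $(a,\bar a)\cdot(b,\bar b)=(ab,\bar a\bar b)$, the product of an element of $A^1$ with an element of $\bar A^1$ is zero in $A\vee\bar A$; equivalently, the multiplication map $(A\vee\bar A)^1\wedge(A\vee\bar A)^1\to(A\vee\bar A)^2$ is just the direct sum of the two multiplications $A^1\wedge A^1\to A^2$ and $\bar A^1\wedge\bar A^1\to\bar A^2$. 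Likewise $D^1=d^1+\bar d^1$ decomposes as a direct sum.

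Next I would dualize. By definition, $\h(A\vee\bar A)=\Lie((A\vee\bar A)_1)/(\im\partial_{A\vee\bar A})$, where $\partial_{A\vee\bar A}=D_1+\nabla$ is the dual of $D^1\oplus(\text{mult})$. Since $(A\vee\bar A)_1=A_1\oplus\bar A_1$ and the degree-$2$ cochain data split as direct sums, the dual map $\partial_{A\vee\bar A}\colon (A\vee\bar A)_2 = A_2\oplus\bar A_2\to \Lie_{\le 2}(A_1\oplus\bar A_1)$ restricts to $\partial_A$ on $A_2$ and to $\partial_{\bar A}$ on $\bar A_2$, and its image is therefore spanned by $\im(\partial_A)$ and $\im(\partial_{\bar A})$. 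Hence the defining ideal is generated by those two sets, which gives exactly the asserted presentation: generators $A_1\oplus\bar A_1$, relations $\partial_A(A_2)=0$ and $\partial_{\bar A}(\bar A_2)=0$, with no relation coupling $A_1$ to $\bar A_1$.

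The one point that needs a little care — and the closest thing to an obstacle — is making sure nothing hides in degree $2$ of the wedge. For the tensor product there was an extra summand $A^1\otimes\bar A^1$ inside degree $2$, which produced the relations $[A_1,\bar A_1]=0$; here I should check that $(A\vee\bar A)^2$ really is just $A^2\oplus\bar A^2$ with no extra term, which follows because the underlying graded vector space of the wedge in positive degrees is defined to be $A^+\oplus\bar A^+$ and the multiplication is the ``coordinatewise'' one. I would also note that $\h(\cdot)$ only depends on the $\le 2$ truncation (the generators come from degree $1$, the relations from the image of $\partial$ out of degree $2$), so it suffices to identify the degree $\le 2$ structure, exactly as in the proof of Proposition \ref{prop:prodpres}. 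Assembling these observations completes the argument. (As an aside, one could phrase the conclusion as $\h(A\vee\bar A)\cong\h(A)\ast\h(\bar A)$, the free product — the categorical coproduct — of Lie algebras, paralleling Corollary \ref{cor:holoprod}, though I would leave that remark for after the proof.)
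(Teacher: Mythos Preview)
Your proof is correct and follows exactly the same approach as the paper's own proof: identify the degree $\le 2$ data of $A\vee\bar A$ (namely $(A\vee\bar A)^1=A^1\oplus\bar A^1$, $(A\vee\bar A)^2=A^2\oplus\bar A^2$, $D^1=d^1\oplus\bar d^1$, and multiplication vanishing on $A^1\otimes\bar A^1$), then dualize. You have simply written out the details more carefully than the paper, which dispatches the argument in three sentences.
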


\begin{proof}
By construction, $(A\vee \bar{A})^1= A^1 \oplus \bar{A}^1$,
$(A\vee \bar{A})^2= A^2 \oplus \bar{A}^2$, and $D^1= d^1 \oplus \bar{d}^1$. 
Moreover, the multiplication map on $A\vee \bar{A}$ restricts to 
the multiplication maps on $A^1 \wedge A^1$ and $\bar{A}^1 \wedge \bar{A}^1$, 
respectively, and is zero when restricted to $A^1 \otimes \bar{A}^1$. 
The conclusion follows at once.
\end{proof}

\begin{corollary}
\label{cor:holo coprod}
The holonomy Lie algebra of a wedge sum of \cdga's  
is isomorphic to the (categorical) coproduct of the respective 
holonomy Lie algebras,
\[
\h(A\vee \bar{A}) \cong \h(A) \coprod \h(\bar{A}).
\]
\end{corollary}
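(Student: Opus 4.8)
The plan is to deduce Corollary~\ref{cor:holo coprod} directly from the presentation obtained in Proposition~\ref{prop:coprodpres}, by recognizing that presentation as the standard presentation of a coproduct of Lie algebras. Recall that the coproduct (free product) $\h(A)\coprod\h(\bar{A})$ in the category of Lie algebras has the following universal property: it comes equipped with Lie algebra maps $\iota\colon \h(A)\to \h(A)\coprod\h(\bar{A})$ and $\bar\iota\colon \h(\bar{A})\to \h(A)\coprod\h(\bar{A})$ such that, for any Lie algebra $\g$ and any pair of morphisms $\psi\colon\h(A)\to\g$, $\bar\psi\colon\h(\bar{A})\to\g$, there is a unique morphism $\h(A)\coprod\h(\bar{A})\to\g$ restricting to $\psi$ and $\bar\psi$. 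Concretely, if $\h(A)=\Lie(A_1)/(\im\partial_A)$ and $\h(\bar A)=\Lie(\bar A_1)/(\im\partial_{\bar A})$, then the coproduct is $\Lie(A_1\oplus\bar A_1)/(\im\partial_A,\im\partial_{\bar A})$, where the two ideals are generated inside the free Lie algebra on the direct sum using the natural inclusions $A_1\hookrightarrow A_1\oplus\bar A_1$ and $\bar A_1\hookrightarrow A_1\oplus\bar A_1$.

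First I would record this concrete model of the Lie coproduct, perhaps with a one-line justification: the free Lie algebra functor is left adjoint to the forgetful functor, so $\Lie(A_1\oplus\bar A_1)=\Lie(A_1)\coprod\Lie(\bar A_1)$, and quotienting by the ideal generated by both relator sets is the pushout that realizes the coproduct of the presented algebras. Next I would invoke Proposition~\ref{prop:coprodpres}, which says precisely that $\h(A\vee\bar A)$ is generated by $A_1\oplus\bar A_1$ subject to exactly the relations $\partial_A(A_2)=0$ and $\partial_{\bar A}(\bar A_2)=0$ — in other words, $\h(A\vee\bar A)=\Lie(A_1\oplus\bar A_1)/(\im\partial_A,\im\partial_{\bar A})$. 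Comparing the two descriptions gives the isomorphism $\h(A\vee\bar A)\cong\h(A)\coprod\h(\bar A)$ on the nose, compatibly with the structure maps.

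Alternatively, and perhaps more cleanly for the write-up, I would phrase the argument via the universal property rather than an explicit presentation match: using Lemma~\ref{lem:flat hom} together with Proposition~\ref{prop:coprodpres} (compare Corollary~\ref{cor:homprod1} for the product case), one checks that for every Lie algebra $\g$ the natural map
\[
\Hom_{\Lie}(\h(A\vee\bar A),\g)\longrightarrow \Hom_{\Lie}(\h(A),\g)\times\Hom_{\Lie}(\h(\bar A),\g)
\]
induced by restriction along the inclusions of $A_1$ and $\bar A_1$ is a bijection — since a morphism out of $\h(A\vee\bar A)$ is just a linear map on $A_1\oplus\bar A_1$ killing $\im\partial_A$ and $\im\partial_{\bar A}$, which is the same as an independent pair of such maps on each summand. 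This identifies $\h(A\vee\bar A)$ as the object representing the functor $\g\mapsto\Hom_{\Lie}(\h(A),\g)\times\Hom_{\Lie}(\h(\bar A),\g)$, which is by definition the coproduct $\h(A)\coprod\h(\bar A)$; Yoneda then yields the asserted isomorphism.

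There is essentially no obstacle here: the real content is Proposition~\ref{prop:coprodpres}, which has already been proved, and everything remaining is the formal observation that "generated by a direct sum, with the two relator sets" is the defining presentation of a Lie coproduct. The only point requiring a word of care is that the coproduct of Lie algebras — unlike that of abelian groups or vector spaces — is the free product, not the direct sum, so one should be explicit that no cross-relations $[A_1,\bar A_1]=0$ are imposed (contrast Corollary~\ref{cor:holoprod}, where exactly those relations appear and turn the free product into the categorical product). I would make sure the statement names the structure maps correctly so that "isomorphic to the coproduct" is an isomorphism of cospans, not merely an abstract isomorphism of Lie algebras.
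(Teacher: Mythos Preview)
Your proposal is correct and follows exactly the approach implicit in the paper: the corollary is stated there with no proof, as an immediate consequence of the presentation in Proposition~\ref{prop:coprodpres}. Your write-up simply spells out the formal step---that a Lie algebra generated by $A_1\oplus\bar A_1$ with only the separate relator sets $\im\partial_A$ and $\im\partial_{\bar A}$ is by definition the free product $\h(A)\coprod\h(\bar A)$---which the paper leaves to the reader.
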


\subsection{Resonance and coproducts}
\label{subset:flat coprod}

As shown in \cite[Proposition 13.3]{PS-plms}, the classical 
resonance varieties behave nicely with respect to wedges of spaces. 
Let us recall this result, in a form adapted to our purposes. 

\begin{proposition}[\cite{PS-plms}]
\label{cor:coprod}
Assume both $A$ and $\bar{A}$ have zero differential.   Then, 
for all $i>1$,  
\[
\RR^i_1(A\vee \bar{A}) = \RR^i_1(A)\times H^1(\bar{A})   
\cup 
H^1(A)  \times \RR^i_1(\bar{A}).
\]
If, moreover, $b_1(A)> 0$ and $b_1(\bar{A})> 0$, then 
\[
\RR^1_1(A\vee \bar{A})  = H^1(A) \times H^1(\bar{A}).
\]
\end{proposition}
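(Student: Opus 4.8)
The plan is to reduce the statement to a direct inspection of Aomoto complexes. Since $A$ and $\bar A$ have zero differential, so does $A\vee\bar A$, and hence $\RR^i_1(A\vee\bar A)$ is a homogeneous subvariety of $(A\vee\bar A)^1=A^1\oplus\bar A^1=H^1(A)\oplus H^1(\bar A)$; I write a typical point as $\omega=\eta+\bar\eta$ with $\eta\in A^1$ and $\bar\eta\in\bar A^1$. The one structural fact I need is that, by definition of the wedge product, every product of an element of $A^+$ with an element of $\bar A^+$ vanishes. Consequently, in each positive degree $k$ the covariant derivative $d_\omega=\omega\cdot(-)$ on $(A\vee\bar A)^k=A^k\oplus\bar A^k$ is the block-diagonal operator $(\eta\cdot)\oplus(\bar\eta\cdot)$ --- the direct sum of the Aomoto differentials of $A$ at $\eta$ and of $\bar A$ at $\bar\eta$ --- while only the degree-$0$ term $\C$, mapping to $A^1\oplus\bar A^1$ by $1\mapsto\eta+\bar\eta$, couples the two sides.

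I would first dispose of the case $i>1$, where the degree-$0$ coupling is irrelevant: the three relevant graded pieces $(A\vee\bar A)^{i-1}$, $(A\vee\bar A)^{i}$, $(A\vee\bar A)^{i+1}$ all split compatibly with the block-diagonal differentials, so
\[
H^i\bigl((A\vee\bar A)\otimes\C,d_\omega\bigr)\cong H^i(A\otimes\C,d_\eta)\oplus H^i(\bar A\otimes\C,d_{\bar\eta}).
\]
Thus $\omega\in\RR^i_1(A\vee\bar A)$ if and only if $\eta\in\RR^i_1(A)$ or $\bar\eta\in\RR^i_1(\bar A)$, which is exactly the asserted union $\RR^i_1(A)\times H^1(\bar A)\cup H^1(A)\times\RR^i_1(\bar A)$. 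The same conclusion also drops out of the short exact sequence of cochain complexes $0\to(A\vee\bar A)\otimes\C\to(A\otimes\C)\oplus(\bar A\otimes\C)\to\C[0]\to 0$ --- the diagonal in degree $0$ and the identity in positive degrees --- whose long exact sequence degenerates in degrees $\ge 2$.

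For $i=1$ I would compute $H^1$ directly from the three-term piece $\C\to A^1\oplus\bar A^1\to A^2\oplus\bar A^2$, where the first arrow is $1\mapsto\eta+\bar\eta$ and the second is $(\eta\cdot)\oplus(\bar\eta\cdot)$. The $1$-cocycles form $\ker(\eta\colon A^1\to A^2)\oplus\ker(\bar\eta\colon\bar A^1\to\bar A^2)$, the $1$-coboundaries form the line $\C\,\omega$ when $\omega\ne0$, and $\omega$ is itself a cocycle since $\eta^2=\bar\eta^2=0$. Hence $\dim H^1=\dim\ker(\eta)+\dim\ker(\bar\eta)-1$ when $\omega\ne0$, and $\dim H^1=\dim A^1+\dim\bar A^1=b_1(A)+b_1(\bar A)$ when $\omega=0$. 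Assuming now $b_1(A)>0$ and $b_1(\bar A)>0$, I would verify that $\dim\ker(\eta\colon A^1\to A^2)+\dim\ker(\bar\eta\colon\bar A^1\to\bar A^2)\ge 2$ for every $\omega$: if $\omega=0$ the two kernels are all of $A^1$ and $\bar A^1$; if $\eta\ne0$ then $\eta$ is a nonzero element of $\ker(\eta)$, while the other summand is either $\ker(\bar\eta)$, which contains $\bar\eta\ne0$, or, when $\bar\eta=0$, all of $\bar A^1$, of positive dimension $b_1(\bar A)$; the subcase $\bar\eta\ne0$ is symmetric. Therefore $H^1\bigl((A\vee\bar A)\otimes\C,d_\omega\bigr)\ne0$ for every $\omega$, that is, $\RR^1_1(A\vee\bar A)=H^1(A)\times H^1(\bar A)$.

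I expect the only delicate point to be the bookkeeping in the $i=1$ case, where the degree-$0$ term genuinely links the two factors and one must track separately whether $\omega$, $\eta$, and $\bar\eta$ vanish; this is precisely where the hypotheses $b_1(A),b_1(\bar A)>0$ enter, and they cannot be dropped --- if, for instance, $A^1=0$ then $A\vee\bar A\cong\bar A$ and $\RR^1_1(A\vee\bar A)=\RR^1_1(\bar A)$, which is in general a proper subvariety of $H^1(\bar A)=H^1(A\vee\bar A)$. Everything else is the routine block-diagonal splitting of the Aomoto complex, together with the definition of the resonance varieties recalled in Section~\ref{subsec:res cdga}.
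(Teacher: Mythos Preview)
Your argument is correct. Note, however, that the paper does not supply a proof for this proposition: it is quoted from \cite{PS-plms} (Proposition~13.3 there) and used as input for the non-abelian generalizations that follow. So there is no ``paper's own proof'' to compare against line by line.

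That said, your approach is precisely the rank-one specialization of the machinery the paper develops in \S\ref{subset:flat coprod}--\S\ref{subsec:res coprod} for the non-abelian case. Your block-diagonal splitting of $d_\omega$ in positive degrees is exactly Lemma~\ref{lem=dwedge}, and your $i>1$ argument is the $\g=\C$ instance of Corollary~\ref{cor:coprodg1}. For $i=1$, your direct count $\dim H^1=\dim\ker(\eta)+\dim\ker(\bar\eta)-1$ (for $\omega\ne 0$) is the rank-one shadow of the $\Phi$-map analysis in Lemmas~\ref{lem:phi map}--\ref{lem:ker0}; in rank one it is sharper, since you only need $b_1(A),b_1(\bar A)>0$, whereas the paper's non-abelian Theorem~\ref{thm:res coprod} must assume in addition that one of the Betti numbers exceeds~$1$ to push the dimension estimate through for $\dim V>1$. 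Your closing remark on the necessity of the Betti-number hypotheses is also on point.
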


Our goal for the rest of this section will be to extend the above proposition 
to the non-abelian setting, for \cdga's with non-zero differential.  
To that end, let $\g$ be a Lie algebra, and let 
$\omega\in A^1\otimes \g$ and $\bar\omega\in \bar{A}^1\otimes \g$.  
Set $\Omega=\omega+\bar\omega \in  (A\vee \bar{A})^1 \otimes \g$. 

\begin{lemma}
\label{lem:flat coprod}
$\Omega$ is a flat connection if and only if both 
$\omega$ and $\bar\omega$ are flat. 
\end{lemma}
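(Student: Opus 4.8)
The statement asserts that the Maurer--Cartan equation for $\Omega = \omega + \bar\omega$ on the wedge sum decouples into the Maurer--Cartan equations for $\omega$ on $A$ and $\bar\omega$ on $\bar A$. The natural approach is to compute $\partial_{\vee}\Omega + [\Omega,\Omega]/2$ directly, where $\partial_{\vee}$ is the differential on $(A\vee\bar A)\otimes\g$, and to observe that the answer lands in the internal direct sum $(A^2\otimes\g) \oplus (\bar A^2\otimes\g)$ with the two summands being precisely the Maurer--Cartan expressions for $\omega$ and $\bar\omega$.

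First I would record the relevant structure of $A\vee\bar A$ in degrees $1$ and $2$, which is already spelled out in the proof of Proposition~\ref{prop:coprodpres}: $(A\vee\bar A)^1 = A^1\oplus\bar A^1$, $(A\vee\bar A)^2 = A^2\oplus\bar A^2$, the differential $D$ is $d\oplus\bar d$, and — crucially — the multiplication sends $A^1\otimes\bar A^1$ to zero, while restricting to the multiplications of $A$ and $\bar A$ on $A^1\wedge A^1$ and $\bar A^1\wedge\bar A^1$. Next I would write $\omega = \sum_j \eta_j\otimes x_j$ with $\eta_j\in A^1$ and $\bar\omega = \sum_k\bar\eta_k\otimes\bar y_k$ with $\bar\eta_k\in\bar A^1$, so that $\Omega = \sum_j\eta_j\otimes x_j + \sum_k\bar\eta_k\otimes\bar y_k$, and plug into the flatness condition in coordinates, equation~(\ref{eq:flat coords}).

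The computation then splits the bracket term $[\Omega,\Omega]/2$ into three pieces: the $A$-$A$ cross terms $\sum_{j<j'}\eta_j\eta_{j'}\otimes[x_j,x_{j'}]$, the $\bar A$-$\bar A$ cross terms $\sum_{k<k'}\bar\eta_k\bar\eta_{k'}\otimes[\bar y_k,\bar y_{k'}]$, and the mixed terms $\sum_{j,k}\eta_j\bar\eta_k\otimes[x_j,\bar y_k]$. The mixed terms vanish identically because $\eta_j\bar\eta_k = 0$ in $A\vee\bar A$ (the product $A^1\cdot\bar A^1$ is zero). Meanwhile the differential term $\partial_{\vee}\Omega = \sum_j d\eta_j\otimes x_j + \sum_k\bar d\bar\eta_k\otimes\bar y_k$ splits cleanly along the direct-sum decomposition of $(A\vee\bar A)^2$. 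Hence the total Maurer--Cartan expression for $\Omega$ equals $\bigl(\partial\omega + [\omega,\omega]/2\bigr) + \bigl(\bar\partial\bar\omega + [\bar\omega,\bar\omega]/2\bigr)$, viewed in $(A^2\otimes\g)\oplus(\bar A^2\otimes\g)$. Since this direct sum is internal, the whole expression vanishes if and only if each summand does, which is exactly the assertion. (Alternatively, one could invoke Lemma~\ref{lem:flat hom} together with Corollary~\ref{cor:holo coprod}: a flat connection on $A\vee\bar A$ is a Lie map $\h(A)\coprod\h(\bar A)\to\g$, which by the universal property of the coproduct is the same as a pair of Lie maps $\h(A)\to\g$ and $\h(\bar A)\to\g$, i.e.\ a pair of flat connections. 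I would likely mention this as a conceptual remark, but give the coordinate computation as the proof since it is self-contained.)

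I do not anticipate a genuine obstacle here; the one point requiring a little care is bookkeeping with signs and with the identification of $(A\vee\bar A)\otimes\g$ with $(A\otimes\g)\oplus(\bar A\otimes\g)$ as differential graded Lie algebras — in particular checking that the bracket on $(A\vee\bar A)\otimes\g$ really does kill the cross terms, which comes down to the vanishing of $A^1\cdot\bar A^1$ noted above. Everything else is a direct unwinding of definitions.
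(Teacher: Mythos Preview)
Your proposal is correct and follows essentially the same approach as the paper: the paper's proof simply notes that $a\cdot\bar a=0$ for $a\in A^+$, $\bar a\in\bar A^+$, so $[\omega,\bar\omega]=0$ and the Maurer--Cartan equation decouples. Your coordinate computation just makes this explicit, and the alternative via Lemma~\ref{lem:flat hom} and Corollary~\ref{cor:holo coprod} is a nice bonus observation.
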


\begin{proof}
By definition of multiplication in $A\vee \bar{A}$, we have that 
$a\cdot \bar{a}=0$ for every $a\in A^+$ and $\bar{a}\in\bar{A}^+$.  
Hence, $[\omega,\bar{\omega}]=0$, and the conclusion follows.
\end{proof}

Now let $\theta\colon \g\to \gl(V)$ be a representation. 
Given an element $\omega\in \F(A,\g)$, we write 
$Z^i_{\omega}=\ker(d_{\omega}\colon A^i\otimes V \to A^{i+1}\otimes V)$ 
and $B^i_{\omega}=\im(d_{\omega} \colon A^{i-1}\otimes V \to A^i\otimes V)$, 
and set $H^i_{\omega}=Z^i_{\omega}/B^i_{\omega}$. 

\begin{lemma}
\label{lem=dwedge}
For $i>0$,
\[
d^i_{\Omega}= d^i_{\omega} \oplus d^i_{\bar{\omega}} \colon 
(A^i \otimes V) \oplus (\bar{A}^i \otimes V) \longrightarrow
(A^{i+1} \otimes V) \oplus (\bar{A}^{i+1} \otimes V) ,
\]
while for $i=0$
\[
d^0_{\Omega}= (d^0_{\omega}, d^0_{\bar{\omega}}) \colon 
(A\vee \bar{A})^0 \otimes V \cong  V \longrightarrow
(A^1 \otimes V) \oplus (\bar{A}^1 \otimes V).
\]
\end{lemma}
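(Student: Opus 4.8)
The plan is to unwind the definition of the covariant derivative $d_\Omega$ on the Aomoto complex of $A \vee \bar A$ directly from formula (\ref{eq=dcoord}), using the description of the multiplication in the wedge sum given just before Proposition \ref{prop:coprodpres}. First I would fix coordinates: write $\omega = \sum_k \eta_k \otimes x_k$ with $\eta_k \in A^1$, and $\bar\omega = \sum_l \bar\eta_l \otimes \bar x_l$ with $\bar\eta_l \in \bar A^1$, so that $\Omega = \sum_k \eta_k \otimes x_k + \sum_l \bar\eta_l \otimes \bar x_l$ as an element of $(A\vee\bar A)^1 \otimes \g$. By Lemma \ref{lem:flat coprod}, $\Omega$ is indeed a flat connection, so $d_\Omega$ is well-defined.

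The key step is the case $i > 0$. Take a homogeneous element of $(A\vee\bar A)^i \otimes V$; since $(A\vee\bar A)^i = A^i \oplus \bar A^i$ for $i>0$ (and in fact for all $i \ge 1$, as noted in the proof of Proposition \ref{prop:coprodpres}), it suffices to treat $\alpha \otimes v$ with $\alpha \in A^i$ (the case $\alpha \in \bar A^i$ being symmetric). Applying (\ref{eq=dcoord}) for the \cdga{} $A\vee\bar A$ with connection $\Omega$ and differential $D = d + \bar d$, I get
\[
d_\Omega(\alpha\otimes v) = D\alpha \otimes v + \sum_k (\eta_k \cdot \alpha)\otimes \theta(x_k)(v) + \sum_l (\bar\eta_l \cdot \alpha)\otimes \theta(\bar x_l)(v).
\]
Now $D\alpha = d\alpha \in A^{i+1}$ since $\alpha \in A^i$ and $\bar d$ kills $A$; the products $\eta_k \cdot \alpha$ lie in $A^{i+1}$ and are computed by the multiplication of $A$; and crucially $\bar\eta_l \cdot \alpha = 0$, because the multiplication in $A \vee \bar A$ satisfies $A^+ \cdot \bar A^+ = 0$. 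Hence $d_\Omega(\alpha\otimes v) = d\alpha\otimes v + \sum_k (\eta_k\alpha)\otimes\theta(x_k)(v) = d_\omega(\alpha\otimes v) \in A^{i+1}\otimes V$, which is exactly the claimed formula $d^i_\Omega = d^i_\omega \oplus d^i_{\bar\omega}$. The case $i = 0$ is handled the same way, the only difference being that $(A\vee\bar A)^0 = \C$ is \emph{not} split as $A^0 \oplus \bar A^0$ (that would double the ground field); instead the unit $1 \in (A\vee\bar A)^0$ is the common unit of both factors, so applying (\ref{eq=dcoord}) to $1 \otimes v$ gives $d_\Omega(1\otimes v) = \sum_k \eta_k\otimes\theta(x_k)(v) + \sum_l \bar\eta_l \otimes \theta(\bar x_l)(v)$, whose two summands land in $A^1\otimes V$ and $\bar A^1\otimes V$ respectively, yielding the pair $(d^0_\omega, d^0_{\bar\omega})$.

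There is no real obstacle here — the statement is essentially a bookkeeping consequence of the vanishing $A^+ \cdot \bar A^+ = 0$ in the wedge sum, combined with the fact that $D$ respects the direct-sum decomposition in positive degrees. The one point that needs a word of care is the asymmetry between $i = 0$ and $i > 0$: one must not mistakenly write $(A\vee\bar A)^0 \cong A^0 \oplus \bar A^0$, and one should note explicitly that the formula for $i>0$ also covers $i=1$ on the target side (the codomain $A^2\oplus\bar A^2$ does decompose). I would present the $i>0$ computation in full and then remark that $i=0$ differs only in the domain being one-dimensional rather than two-dimensional over $\C$.
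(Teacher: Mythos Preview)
Your proof is correct and is exactly the ``straightforward direct computation'' the paper alludes to: both arguments rest on formula~(\ref{eq=dcoord}) together with the vanishing $A^+\cdot\bar A^+=0$ in the wedge construction. You have simply made explicit what the paper leaves to the reader.
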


\begin{proof}
Both claims follow from (\ref{eq=dcoord}) and the construction of $A\vee \bar{A}$,
by straightforward direct computation.
\end{proof}

\begin{corollary}
\label{cor:coprodg1}
For each $i>1$ and for any representation $\theta\colon \g\to \gl(V)$, 
\[
\RR^i_1(A\vee \bar{A}, \theta) = 
\RR^i_1(A, \theta) \times \F(\bar{A}, \g) \cup
\F(A, \g) \times \RR^i_1(\bar{A}, \theta).
\]
\end{corollary}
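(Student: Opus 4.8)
The plan is to compute the twisted cochain complex $(A\vee\bar A)\otimes V$ with differential $d_\Omega$ directly from Lemma \ref{lem=dwedge}, and read off when its degree-$i$ cohomology jumps. Since $i>1$, we are entirely in the range where $d^i_\Omega$ splits as a direct sum; only the degree $0$ part is glued together. So I would first record that, for $i\ge 2$, both $Z^i_\Omega$ and $B^i_\Omega$ split: $Z^i_\Omega = Z^i_\omega\oplus Z^i_{\bar\omega}$ and, because $i-1\ge 1$, $B^i_\Omega = B^i_\omega\oplus B^i_{\bar\omega}$. Hence $H^i_\Omega\cong H^i_\omega\oplus H^i_{\bar\omega}$ for all $i>1$, as $H^{\hdot}(A)$-modules if one wishes, but at least as vector spaces, which is all that enters the definition \eqref{eq:rra} of the resonance varieties.

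Next I would translate this into the statement about jump loci. By Lemma \ref{lem:flat coprod}, an element $\Omega\in(A\vee\bar A)^1\otimes\g$ is a flat connection exactly when $\Omega=\omega+\bar\omega$ with $\omega\in\F(A,\g)$ and $\bar\omega\in\F(\bar A,\g)$, so $\F(A\vee\bar A,\g)$ is literally the product $\F(A,\g)\times\F(\bar A,\g)$ inside $(A^1\oplus\bar A^1)\otimes\g$. Combining this identification with the isomorphism $H^i_\Omega\cong H^i_\omega\oplus H^i_{\bar\omega}$ from the previous step, we get, for $i>1$,
\[
\dim_\C H^i_\Omega\ge 1 \iff \dim_\C H^i_\omega\ge 1 \text{ or } \dim_\C H^i_{\bar\omega}\ge 1,
\]
i.e. $\Omega\in\RR^i_1(A\vee\bar A,\theta)$ if and only if $\omega\in\RR^i_1(A,\theta)$ or $\bar\omega\in\RR^i_1(\bar A,\theta)$. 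Unwinding this membership statement under the product description of $\F(A\vee\bar A,\g)$ gives exactly
\[
\RR^i_1(A\vee\bar A,\theta)=\RR^i_1(A,\theta)\times\F(\bar A,\g)\ \cup\ \F(A,\g)\times\RR^i_1(\bar A,\theta),
\]
which is the claimed formula.

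The only genuinely delicate point is the bookkeeping around degree $i=1$ versus $i>1$: the splitting $B^i_\Omega=B^i_\omega\oplus B^i_{\bar\omega}$ uses that the image in degree $i$ comes from degree $i-1\ge 1$, where $d_\Omega$ already splits by Lemma \ref{lem=dwedge}; in degree $1$ the image comes from $(A\vee\bar A)^0\otimes V\cong V$, which is \emph{not} a direct sum of $A^0\otimes V$ and $\bar A^0\otimes V$ (the two copies of $V$ are identified), so the argument genuinely breaks there — and indeed the $i=1$ case is handled separately in Theorem \ref{thm:res coprod}. So the main thing to be careful about is to invoke the hypothesis $i>1$ at precisely the point where $B^i$ is shown to split, and otherwise the proof is a short, direct verification. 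I would present it as: (1) cite Lemma \ref{lem:flat coprod} for the product description of $\F(A\vee\bar A,\g)$; (2) cite Lemma \ref{lem=dwedge} and note the resulting splitting of $d^i_\Omega$ for $i\ge 1$; (3) deduce $H^i_\Omega\cong H^i_\omega\oplus H^i_{\bar\omega}$ for $i>1$, using $i-1\ge 1$ to split the coboundaries; (4) read off the resonance formula from definition \eqref{eq:rra}.
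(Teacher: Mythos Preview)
Your proposal is correct and follows essentially the same approach as the paper: both invoke Lemma~\ref{lem=dwedge} to obtain the splitting $H^i_{\Omega}\cong H^i_{\omega}\oplus H^i_{\bar\omega}$ for $i>1$, and then use Lemma~\ref{lem:flat coprod} to identify $\F(A\vee\bar A,\g)$ with $\F(A,\g)\times\F(\bar A,\g)$ and read off the formula. Your version is simply more explicit about why the hypothesis $i>1$ is needed for the coboundary splitting, a point the paper's two-sentence proof leaves to the reader.
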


\begin{proof}
By Lemma \ref{lem=dwedge}, $H^i_{\Omega} \cong 
H^i_{\omega} \oplus H^i_{\bar{\omega}}$.  Using this isomorphism, 
the desired conclusion follows from Lemma \ref{lem:flat coprod}. 
\end{proof}

\subsection{A coproduct formula for degree $1$ resonance}
\label{subsec:res coprod}

To conclude, we compute the degree $1$ resonance variety 
of a wedge sum, $\RR^1_1(A\vee \bar{A}, \theta)$.  We start 
with two lemmas.

\begin{lemma}
\label{lem:phi map}
There is a surjective homomorphism 
\[
\xymatrix{
H^1((A\vee \bar{A})\otimes V, d_{\Omega}) \ar@{->>}^(.41){\Phi}[r]
&H^1(A\otimes V, d_{\omega}) \oplus 
H^1(\bar{A}\otimes V, d_{\bar{\omega}})}, 
\]
whose kernel is isomorphic to $(B^1_{\omega} \oplus B^1_{\bar{\omega}})/ 
\im ((d^0_{\omega}\oplus d^0_{\bar{\omega}})\circ \Delta)$, 
where $\Delta \colon V \to V \oplus V$ is the diagonal map.
\end{lemma}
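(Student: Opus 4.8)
The plan is to construct the map $\Phi$ directly from the decomposition of the twisted differential in Lemma~\ref{lem=dwedge}, then identify its kernel by a diagram chase. First, I would record what the cochain complex $((A\vee\bar A)\otimes V, d_\Omega)$ looks like in low degrees. By construction $(A\vee\bar A)^0=\C$, so $(A\vee\bar A)^0\otimes V\cong V$, while for $i\ge 1$ we have $(A\vee\bar A)^i\otimes V\cong (A^i\otimes V)\oplus(\bar A^i\otimes V)$. By Lemma~\ref{lem=dwedge}, $d^0_\Omega=(d^0_\omega,d^0_\bar\omega)\colon V\to (A^1\otimes V)\oplus(\bar A^1\otimes V)$, which is exactly $(d^0_\omega\oplus d^0_\bar\omega)\circ\Delta$, and $d^1_\Omega=d^1_\omega\oplus d^1_\bar\omega$. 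Consequently $Z^1_\Omega=Z^1_\omega\oplus Z^1_\bar\omega$, whereas $B^1_\Omega=\im\big((d^0_\omega\oplus d^0_\bar\omega)\circ\Delta\big)$, which is a \emph{submodule} of $B^1_\omega\oplus B^1_\bar\omega$ but typically a proper one — this mismatch is precisely what $\Phi$ will measure.

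Next I would define $\Phi$. The identity map $Z^1_\omega\oplus Z^1_\bar\omega = Z^1_\Omega \hookrightarrow (A^1\otimes V)\oplus(\bar A^1\otimes V)$ followed by the two quotient maps $Z^1_\omega\surj H^1_\omega$ and $Z^1_\bar\omega\surj H^1_\bar\omega$ gives a surjection $Z^1_\Omega \surj H^1_\omega\oplus H^1_\bar\omega$ whose kernel is $B^1_\omega\oplus B^1_\bar\omega$. Since $B^1_\Omega=\im\big((d^0_\omega\oplus d^0_\bar\omega)\circ\Delta\big)\subseteq B^1_\omega\oplus B^1_\bar\omega$ is contained in that kernel, the surjection descends to a well-defined surjective homomorphism $\Phi\colon H^1_\Omega = Z^1_\Omega/B^1_\Omega \surj H^1_\omega\oplus H^1_\bar\omega$. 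All of this is compatible with the $H^0=\C$-module structure, so $\Phi$ is a homomorphism of modules (here just $\C$-linear maps), as required.

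Finally I would compute $\ker\Phi$. By construction $\ker\Phi=(B^1_\omega\oplus B^1_\bar\omega)/B^1_\Omega$, and since $B^1_\Omega=\im\big((d^0_\omega\oplus d^0_\bar\omega)\circ\Delta\big)$ this is exactly $(B^1_\omega\oplus B^1_\bar\omega)\big/\im\big((d^0_\omega\oplus d^0_\bar\omega)\circ\Delta\big)$, which is the stated formula. One should double-check that $\im\big((d^0_\omega\oplus d^0_\bar\omega)\circ\Delta\big)$ genuinely lands inside $B^1_\omega\oplus B^1_\bar\omega$: indeed $(d^0_\omega\oplus d^0_\bar\omega)\circ\Delta(v)=(d^0_\omega v, d^0_\bar\omega v)\in B^1_\omega\oplus B^1_\bar\omega$, so the quotient makes sense.

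I do not anticipate a serious obstacle here; the only subtlety worth care is the bookkeeping at degree $0$ — namely that $A^0=\bar A^0=\C$ are \emph{identified} in the wedge $A\vee\bar A$ (the underlying space in positive degrees is $A^+\oplus\bar A^+$, with a single copy of $\C$ in degree $0$), which is what produces the diagonal $\Delta$ in $d^0_\Omega$ rather than a direct sum. Making sure this identification is used consistently, so that $B^1_\Omega$ comes out as the image of $(d^0_\omega\oplus d^0_\bar\omega)\circ\Delta$ and not of $d^0_\omega\oplus d^0_\bar\omega$, is the crux of the argument; everything else is a routine diagram chase.
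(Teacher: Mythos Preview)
Your proposal is correct and follows exactly the approach the paper intends: the paper's own proof is the single line ``Follows from Lemma~\ref{lem=dwedge},'' and what you have written is precisely the routine unpacking of that lemma into the identifications $Z^1_\Omega=Z^1_\omega\oplus Z^1_{\bar\omega}$ and $B^1_\Omega=\im\big((d^0_\omega\oplus d^0_{\bar\omega})\circ\Delta\big)$, from which surjectivity and the kernel formula are immediate.
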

\begin{proof}
Follows from Lemma \ref{lem=dwedge}.
\end{proof}

\begin{lemma}
\label{lem:ker0}
The homomorphism $\Phi$ is injective if and only if 
$V=Z^0_{\omega}+Z^0_{\bar\omega}$.
\end{lemma}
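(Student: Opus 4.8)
The plan is to trace through the explicit description of the kernel of $\Phi$ given in Lemma~\ref{lem:phi map}, and to recognize when it vanishes. By that lemma, $\ker \Phi \cong (B^1_{\omega}\oplus B^1_{\bar\omega})/\im((d^0_{\omega}\oplus d^0_{\bar\omega})\circ \Delta)$, so $\Phi$ is injective precisely when the map $(d^0_{\omega}\oplus d^0_{\bar\omega})\circ \Delta\colon V\to B^1_{\omega}\oplus B^1_{\bar\omega}$ is surjective. First I would unwind what this map does: it sends $v\mapsto (d^0_{\omega}v,\, d^0_{\bar\omega}v)$, landing inside $B^1_{\omega}\oplus B^1_{\bar\omega}$ since $B^i_\omega=\im(d^i_\omega)$ by definition. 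So the question reduces to: for which $(\omega,\bar\omega)$ is the "diagonal" image $\{(d^0_\omega v, d^0_{\bar\omega}v): v\in V\}$ all of $B^1_\omega\oplus B^1_{\bar\omega}$?

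Next I would set up the bookkeeping with dimensions. Note that $d^0_\omega\colon V\to A^1\otimes V$ has image $B^1_\omega$ and kernel $Z^0_\omega$, so $\dim B^1_\omega = \dim V - \dim Z^0_\omega$; likewise for the barred version. The map $(d^0_\omega, d^0_{\bar\omega})\circ\Delta$ has kernel exactly $Z^0_\omega\cap Z^0_{\bar\omega}$, hence its image has dimension $\dim V - \dim(Z^0_\omega\cap Z^0_{\bar\omega})$. Therefore the image is all of $B^1_\omega\oplus B^1_{\bar\omega}$ if and only if
\[
\dim V - \dim(Z^0_\omega\cap Z^0_{\bar\omega}) = (\dim V - \dim Z^0_\omega) + (\dim V - \dim Z^0_{\bar\omega}),
\]
which rearranges to $\dim Z^0_\omega + \dim Z^0_{\bar\omega} - \dim(Z^0_\omega\cap Z^0_{\bar\omega}) = \dim V$, i.e.\ $\dim(Z^0_\omega + Z^0_{\bar\omega}) = \dim V$, i.e.\ $V = Z^0_\omega + Z^0_{\bar\omega}$. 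This gives the "if and only if" directly once one checks the dimension count genuinely characterizes surjectivity.

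The one point that requires care — and I expect it to be the main (if modest) obstacle — is that a dimension count alone does not prove surjectivity of a linear map onto a subspace; one must confirm that the image actually lies in $B^1_\omega\oplus B^1_{\bar\omega}$ and that equality of dimensions with the target subspace forces equality. The containment is immediate from $B^i_\omega = \im(d^i_\omega)$. For the rest: the image of $(d^0_\omega,d^0_{\bar\omega})\circ\Delta$ is a subspace of $B^1_\omega\oplus B^1_{\bar\omega}$ whose dimension equals $\dim V - \dim(Z^0_\omega\cap Z^0_{\bar\omega})$, and this equals $\dim(B^1_\omega\oplus B^1_{\bar\omega})$ exactly under the condition $V = Z^0_\omega + Z^0_{\bar\omega}$; a subspace of a finite-dimensional space with the same dimension is the whole space. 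Combining with Lemma~\ref{lem:phi map}, $\ker\Phi = 0$ iff this surjectivity holds iff $V = Z^0_\omega + Z^0_{\bar\omega}$, which is the claim. I would write this up as a short computation, being explicit about the kernel $Z^0_\omega\cap Z^0_{\bar\omega}$ of the diagonal map and the rank-nullity steps.
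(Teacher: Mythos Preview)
Your argument is correct and follows the same reduction as the paper: via Lemma~\ref{lem:phi map}, injectivity of $\Phi$ is equivalent to surjectivity of $(d^0_\omega,d^0_{\bar\omega})\circ\Delta$ onto $B^1_\omega\oplus B^1_{\bar\omega}$, and this is then settled by elementary linear algebra. The only cosmetic difference is that the paper invokes the splitting $V\oplus V=\im(\Delta)\oplus(V\times 0)$ to finish, whereas you use a rank--nullity and dimension-of-sum computation; both yield the condition $V=Z^0_\omega+Z^0_{\bar\omega}$ immediately.
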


\begin{proof}
Start by noting that $V \oplus V= \im (\Delta) \oplus (V\times 0)$. 
A standard linear algebra argument, then, finishes the proof. 
\end{proof}
 
\begin{theorem}
\label{thm:res coprod}
Suppose both $b_1(A)$ and $b_1(\bar{A})$ are positive, 
and at least one of them is greater than $1$. Then, 
for any representation $\theta\colon \g\to \gl(V)$, 
\[
\RR^1_1(A\vee \bar{A}, \theta) = 
\F(A\vee \bar{A}, \g).
\]
\end{theorem}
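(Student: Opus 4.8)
The plan is to prove the inclusion $\F(A\vee\bar A,\g)\subseteq\RR^1_1(A\vee\bar A,\theta)$; the reverse inclusion is built into definition~(\ref{eq:rra}). I would fix a flat connection $\Omega$ on $A\vee\bar A$, write it by Lemma~\ref{lem:flat coprod} as $\Omega=\omega+\bar\omega$ with $\omega\in\F(A,\g)$ and $\bar\omega\in\F(\bar A,\g)$, and show that the space $H^1_\Omega=H^1((A\vee\bar A)\otimes V,d_\Omega)$ is nonzero. The whole argument is organized around the surjection $\Phi\colon H^1_\Omega\surj H^1_\omega\oplus H^1_{\bar\omega}$ of Lemma~\ref{lem:phi map} and the fact, from Lemma~\ref{lem:ker0}, that $\Phi$ is injective precisely when $V=Z^0_\omega+Z^0_{\bar\omega}$; this splits the proof into two cases.

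If $V\ne Z^0_\omega+Z^0_{\bar\omega}$, then $\Phi$ is not injective, so $0\ne\ker\Phi\subseteq H^1_\Omega$, and $\Omega\in\RR^1_1(A\vee\bar A,\theta)$ with nothing further to check. The substantive case is $V=Z^0_\omega+Z^0_{\bar\omega}$, where $\Phi$ is an isomorphism, so $H^1_\Omega\cong H^1_\omega\oplus H^1_{\bar\omega}$, and it suffices to rule out the simultaneous vanishing of $H^1_\omega$ and $H^1_{\bar\omega}$. The key point is the dimension estimate
\[
H^1_\omega=0\ \Longrightarrow\ (b_1(A)+1)\,\dim Z^0_\omega\le\dim V .
\]
To prove it, I would first note that since $A$ is connected the differential vanishes on $A^0$, so $B^1(A)=0$ and hence $\dim Z^1(A)=b_1(A)$; then, for $\alpha\in Z^1(A)$ and $v\in Z^0_\omega$, formula~(\ref{eq=dcoord}) combined with $d\alpha=0$ and $d^0_\omega(v)=0$ gives $d_\omega(\alpha\otimes v)=0$, so $Z^1(A)\otimes Z^0_\omega$ is a subspace of $Z^1_\omega$ of dimension $b_1(A)\,\dim Z^0_\omega$; finally, if $H^1_\omega=0$ then $Z^1_\omega=B^1_\omega$, and since $\dim B^1_\omega=\dim V-\dim Z^0_\omega$, comparing dimensions yields the claim. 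The same estimate holds for $\bar\omega$. Assuming (after possibly interchanging $A$ and $\bar A$) that $b_1(A)\ge 2$ and $b_1(\bar A)\ge 1$, the vanishing of both $H^1_\omega$ and $H^1_{\bar\omega}$ would then force
\[
\dim V\le\dim Z^0_\omega+\dim Z^0_{\bar\omega}\le\Big(\frac{1}{b_1(A)+1}+\frac{1}{b_1(\bar A)+1}\Big)\dim V\le\frac{5}{6}\,\dim V ,
\]
which is impossible as $V\ne 0$. Hence $H^1_\Omega\ne 0$ in this case as well, completing the proof.

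The main obstacle is the dimension estimate above — specifically, recognizing that $Z^1(A)\otimes Z^0_\omega$ consists entirely of $d_\omega$-cocycles and carries the full product dimension, since this is exactly what converts the hypothesis $b_1(A),b_1(\bar A)>0$ with one of them $>1$ into a numerical contradiction. Once that is in place, the rest are routine invocations of Lemmas~\ref{lem:flat coprod}, \ref{lem:phi map}, and~\ref{lem:ker0}, together with the elementary observations that $B^1(A)=0$ for a connected \cdga{} and $\dim B^1_\omega=\dim V-\dim Z^0_\omega$.
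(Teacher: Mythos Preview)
Your proof is correct and follows essentially the same approach as the paper: both arguments hinge on the inclusion $Z^1(A)\otimes Z^0_\omega\subseteq Z^1_\omega$ to obtain the estimate $\dim Z^0_\omega\le \dim V/(b_1(A)+1)$, and then combine this with Lemma~\ref{lem:ker0} to reach a contradiction. The only difference is organizational: the paper runs a single reductio from $H^1_\Omega=0$ (which forces both $H^1_\omega=H^1_{\bar\omega}=0$ via surjectivity of $\Phi$, and $V=Z^0_\omega+Z^0_{\bar\omega}$ via injectivity), whereas you split upfront on whether $V=Z^0_\omega+Z^0_{\bar\omega}$.
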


\begin{proof}
Set $r=\dim V$. Using our hypothesis,
we may assume that $b_1(A)>1$ and $b_1(\bar{A})\ge 1$. 
Supposing $H^1_{\Omega}=0$ for
some $\Omega =\omega + \bar{\omega}\in \F (A\vee \bar{A}, \g)$, 
we derive a contradiction, as follows. 

Lemma \ref{lem:phi map} implies that 
$Z^1_{\omega}=B^1_{\omega}$ 
and $Z^1_{\bar{\omega}}=B^1_{\bar{\omega}}$. 
Furthermore, the discussion from \S\ref{subsec:aomoto}  
shows that $Z^1(A)\otimes Z^0_{\omega} \subseteq Z^1_{\omega}$ and 
$Z^1(\bar{A})\otimes Z^0_{\bar{\omega}} \subseteq Z^1_{\bar{\omega}}$.
Hence, 
\[
r- \dim Z^0_{\omega}= \dim B^1_{\omega}=  
\dim Z^1_{\omega}\ge b_1(A) \cdot \dim Z^0_{\omega}, 
\]
and so $\dim Z^0_{\omega}\le r/(b_1(A)+1)< r/2$. Similarly,
$\dim Z^0_{\bar{\omega}}\le r/2$.

Using again Lemma \ref{lem:phi map}, we deduce that $\Phi$ 
must be injective.  By Lemma \ref{lem:ker0}, 
\[
r= \dim (Z^0_{\omega}+ Z^0_{\bar{\omega}}) \le 
\dim Z^0_{\omega}+ \dim Z^0_{\bar{\omega}}<r,
\]
a contradiction.
\end{proof}

\begin{acknowledgement}
This work was started while the two authors visited 
the Max Planck Institute for Mathematics in Bonn in April--May 2012. 
The work was pursued while the second author visited the Institute of 
Mathematics of the Romanian Academy in June, 2012 and 
June, 2013, and MPIM Bonn in September--October 2013.  
Thanks are due to both institutions for their hospitality, 
support, and excellent research atmosphere. 
\end{acknowledgement}

\end{document}